\documentclass[12pt]{article}
\usepackage[utf8]{inputenc}
\usepackage{amsmath, amssymb}
\usepackage{graphicx}
\usepackage{geometry}
\usepackage{amsmath}
\usepackage{amssymb}
\usepackage{amsfonts}
\usepackage{comment}
\usepackage{graphicx}

\usepackage{float}

\usepackage{caption}

\usepackage{subcaption}

\usepackage{setspace}
\usepackage{enumitem}
\usepackage{bigints}
\usepackage{xcolor}

\usepackage{amsthm}
\usepackage[
    colorlinks=true,
    linkcolor=blue,   
    citecolor=red,    
    urlcolor=black     
]{hyperref}

\newtheorem{theorem}{Theorem}[section]
\newtheorem{lemma}[theorem]{Lemma}
\newtheorem{proposition}[theorem]{Proposition}

\theoremstyle{definition}

\theoremstyle{remark}
\newtheorem{remark}[theorem]{Remark}

\usepackage[capitalise,nameinlink]{cleveref}

\everymath{\displaystyle}

\title{Analytical  Study of Minimizers of an N-Field System with Sphere-Valued Constraints}
\author{
Khaled Chacouche$^{\dagger}$,
Rejeb Hadiji$^{\ddagger}$,
Nahla Noun-Beydoun$^{\dagger}$
}

\date{}

\begin{document}

\maketitle
\begin{center}
{\small

$^{\ddagger}$ \textbf{Rejeb Hadiji}\\
Univ Paris Est Creteil, CNRS, LAMA, F-94010 Creteil, France.
Univ Gustave Eiffel, LAMA, F-77447 Marne-la-Vallee, France.\\
E-mail: \href{mailto:rejeb.hadiji@u-pec.fr}{rejeb.hadiji@u-pec.fr}

\vspace{0.4cm}

$^{\dagger}$ \textbf{Khaled Chacouche, Nahla Noun-Beydoun}\\
LyRIDS, ECE Paris, Graduate School of Engineering, 10 rue Sextius Michel, 75015 Paris, France.\\
E-mail: \href{mailto:kchacouche@ece.fr}{kchacouche@ece.fr} ; 
\href{mailto:nbeydoun@ece.fr}{nbeydoun@ece.fr}
}
\end{center}

\vspace{0.2cm}

\begin{abstract}
This paper investigates a variational model that describes a system of $N$ coupled sphere-valued fields through a penalization term imposing the constraint that their sum coincides with a prescribed sphere value map. We analyze the asymptotic behavior and regularity of minimizers as the penalization parameter tends to infinity. A fundamental dichotomy is established according to the number of interacting fields. For $N=2$, topological obstructions may prevent the existence of exact sphere-valued decompositions, leading to the blow-up of the minimal energy in the large-penalization regime. In contrast, for $N\geq 3$, exact decompositions always exist, yielding uniform energy bounds and allowing the identification of the limiting constrained problem via $\Gamma$-convergence. We further derive an explicit characterization of the limiting energy by decomposing admissible configurations into an average field and fluctuation components. Finally, we establish a gap phenomenon showing that, under suitable topological assumptions, every minimizer is necessarily singular for sufficiently large values of the penalization parameter. These results highlight the fundamental role of topology in the asymptotic behavior and regularity of coupled sphere-valued variational systems and extend previous results on single-field models to the multi-field setting.
\end{abstract}

\medskip

\noindent
\textbf{2020 Mathematics Subject Classification.}
35B40, 35J60, 49J45.

\medskip

\noindent
\textbf{Keywords and phrases.} Sphere-valued maps,
$N$-field system, Singular minimizers, $\Gamma$-convergence,
Topological obstructions, Gap phenomenon,
Penalized variational problems.

\section{Introduction}

\subsection{Sphere-valued variational problems}
Variational problems for maps taking values in the unit sphere
$S^{2}$ form an important class of constrained nonlinear problems in
geometric analysis and mathematical physics. They arise in models of
nematic liquid crystals \cite{B87,GP,MAJ,A}, micromagnetics
\cite{BG}, ferromagnetic materials, nonlinear sigma models, and more
generally, systems with manifold-valued order parameters. In such models, the pointwise spherical constraint reflects the
conservation of the magnitude of an underlying physical quantity, such
as the molecular orientation in liquid crystals or the magnetization
in ferromagnets.

The extension of the single-field framework to systems of multiple
interacting maps is motivated by their occurrence in multi-component
liquid crystals \cite{LC1, LC2, LC3}, multilayer magnetic structures, and
composite ferromagnetic materials \cite{MG1, MG2, MG3}. In these settings,
coupled order parameters describe complex collective phenomena, including
the emergence of defects, pattern formation, and phase transitions driven
by the synergy between geometric constraints and mutual field interactions.
Beyond physical models, such multi-field systems are increasingly relevant
in directional image processing, where they provide a mathematical basis
for the decomposition, denoising, and fusion of manifold-valued data \cite{IM1}.
From an analytical perspective, a fundamental challenge lies in determining
how this coupling mechanism dictates the existence, qualitative properties,
and regularity of configurations, as well as the formation of singularities
within the system \cite{MG4}.

The analysis of sphere-valued Sobolev maps has developed
substantially over the past decades. The pioneering works of Schoen
and Uhlenbeck \cite{SU1,SU2} established regularity theory to minimize harmonic maps, while Hardt, Kinderlehrer, and Lin \cite{HKL} proved the existence and partial regularity results for liquid
crystal configurations. The density of smooth sphere-valued maps and
the topological obstructions to strong approximation were studied by
Bethuel, Brezis, and Coron \cite{BBC,BCL}, and by Hardt and Lin
\cite{HL}. Together, these works established the analytical and
topological framework of modern theory of manifold-valued variational problems \cite{LM,Z}.

\subsection{The penalized single-field model}

The present work is motivated by the penalized harmonic map model
introduced in \cite{HZ}, see also \cite{HaZh}. Let
$\Omega\subset\mathbb{R}^{3}$ be a bounded domain,
$\lambda>0$ a penalization parameter, and
$f\in H^{1}(\Omega,S^{2})$ a prescribed sphere-valued map.
The authors considered the minimization problem associated with the energy
\begin{equation}\label{F_lambda_singl_field}
F_{\lambda}(u)
=
\bigintsss_{\Omega}
\left(
\frac12 |\nabla u|^{2}
+
\lambda |u-f|^{2}
\right)\,dx,
\qquad
u\in H^{1}(\Omega,S^{2}).
\end{equation}

\noindent The functional consists of the Dirichlet energy together with a
quadratic penalization term that enforces the proximity of the unknown
map to the prescribed sphere-valued map. The asymptotic analysis carried out in
\cite{HZ} and \cite{HaZh} showed that the behavior of minimizing solutions in the limit as the penalization parameter
$\lambda$ tends to infinity  is closely related to the topology of the target
manifold and to the approximation properties of
$H^{1}(\Omega,S^{2})$ by smooth sphere-valued maps. More recently,
additional regularity results together with numerical simulations
illustrating the qualitative behavior of minimizing configurations
were obtained in \cite{HaZh}. In a related direction, Gaudiello and Hadiji \cite{GH09} performed an 
asymptotic analysis of minimizing maps with values in $S^2$ in a thin multidomain 
setting, highlighting the role of geometric constraints in the limiting behavior.\\

\subsection{A multi-field model for interacting sphere-valued fields}
The primary objective of the present work is to extend the penalized single-field framework introduced in \cite{HZ, HaZh} to systems of interacting sphere-valued maps. From a mathematical perspective, coupling several constrained fields introduces new variational phenomena that are entirely absent in the classical single-field setting.\\

\noindent For an integer $N\ge2$, we define the multi-field configuration space
\begin{equation}\label{fields_space}
\mathcal{H}:=H^{1}(\Omega,S^{2})^{N}.
\end{equation}
For
$U=(u_{1},\ldots,u_{N})\in\mathcal{H}$,
we consider the penalized energy functional
\begin{equation}\label{Functional}
F_{\lambda}(U)
=
\bigintss_{\Omega}
\left(
\frac12\sum_{i=1}^{N}|\nabla u_i|^{2}
+
\lambda
\left|
\sum_{i=1}^{N}u_i-f
\right|^{2}
\right)\,dx.
\end{equation}

\noindent The first term corresponds to the total Dirichlet energy of the interacting
system, while the second introduces a global coupling through 
penalization of the constraint
\begin{equation}\label{constraint_intro}
\sum_{i=1}^{N}u_i=f.
\end{equation}

\noindent The associated minimization problem is given by
\begin{equation}\label{MinProblem}
E_{\lambda}(f)
:=
\inf_{U\in\mathcal{H}}
F_{\lambda}(U).
\end{equation}

By the direct method of the calculus of variations, the functional
$F_{\lambda}$ admits at least one minimizer
\[
U^\lambda=(u_1^\lambda,\ldots,u_N^\lambda)\in\mathcal{H},
\]
which satisfies
\[
E_\lambda(f)
= \min_{U\in\mathcal{H}}
F_{\lambda}(U)
=F_\lambda(U^\lambda).
\]

\noindent Moreover, every minimizer satisfies the coupled Euler--Lagrange system:
\begin{equation}
-\Delta u_i^\lambda
=
u_i^\lambda|\nabla u_i^\lambda|^{2}
+
2\lambda
\left(
f-\sum_{j=1}^{N}u_j^\lambda
-
\left\langle
u_i^\lambda,
f-\sum_{j=1}^{N}u_j^\lambda
\right\rangle
u_i^\lambda
\right),
\qquad i=1,\ldots,N.
\end{equation}

\noindent The coupling term induces strong interactions among the different
components, substantially enriching the mathematical structure
of the problem. In contrast to the single-field model, the interplay
between the geometric constraint and the coupling mechanism gives rise
to new analytical features whose analysis requires refined variational
techniques.

\noindent As in the classical theory of minimizing harmonic maps
\cite{SU1,SU2,LM}, minimizing configurations may develop
singularities. One of the main objectives of the present work is to
investigate how the coupling mechanism affects the asymptotic behavior
of minimizers, the associated limiting variational problem, and the
formation of singular configurations.

\subsection{Asymptotic regime and exact decompositions}

We investigate the asymptotic behavior of the minimization problem in the
strong penalization regime, as $\lambda$ tends to $+\infty$. In this limit, the penalization term
asymptotically enforces the coupling constraint
\begin{equation}\label{constraint_intro2}
\sum_{i=1}^{N}u_i=f,
\end{equation}
so that the limiting behavior of minimizers is entirely determined by
the existence of admissible sphere-valued decompositions of the
prescribed map $f$. Consequently, the asymptotic properties of both
the minimizing configurations and the associated minimal energies are
closely related to the solvability of the above nonlinear constraint.

A fundamental distinction arises depending on the number of interacting fields. For $N=2$, the coupling constraint may not admit admissible
solutions in $H^{1}(\Omega,S^{2})^{2}$ due to topological
obstructions. More precisely, a decomposition of the form
\[
f=u_{1}+u_{2},
\qquad
u_{1},u_{2}\in H^{1}(\Omega,S^{2}),
\]
does not necessarily exist. This lack of admissible configurations has
a direct impact on the variational problem, leading to the blow-up of
the minimal energy as the penalization parameter tends to infinity.

The situation is fundamentally different when $N\ge3$. In this case,
every map with sphere values admits an exact decomposition satisfying the
coupling constraint. This structural property guarantees the existence
of uniformly bounded admissible competitors, yields uniform estimates
for the minimal energy, and provides the appropriate compactness
framework required for the asymptotic analysis of the family
$\left(F_{\lambda}\right)_{\lambda>0}$.
It furthermore allows the identification of the limiting constrained
variational problem together with an explicit characterization of the
corresponding limiting energy.

\subsection{Gap phenomenon and singular minimizers}

A further objective of this work is to investigate the relationship between topological obstructions and singular minimizers. To this end, we consider the restricted minimization problem
\begin{equation}\label{RegularEnergy}
E_{\lambda,\mathrm{reg}}(f)
=
\inf_{U\in C^{1}(\overline{\Omega},S^{2})^{N}}
F_{\lambda}(U),
\end{equation}
where the admissible class is restricted to smooth
sphere-valued configurations.

\noindent This notion is closely related to the gap phenomena established by
Bethuel, Brezis and Coron \cite{BBC,BCL}, and later investigated by
Hadiji and Zhou \cite{HZ}.
Under a suitable topological obstruction that prevents the decomposition
of $f$ into strongly approximable sphere-valued maps, we
prove the existence of a constant $\lambda_{0}>0$ such that
\begin{equation}\label{hcn_singularity}
E_{\lambda}(f)
<
E_{\lambda,\mathrm{reg}}(f),
\qquad
\forall\,\lambda\ge\lambda_{0}.
\end{equation}

\noindent The strict inequality shows that the infimum over smooth admissible
configurations cannot be attained by any minimizer. As a
consequence, every minimizer of $F_{\lambda}$ necessarily develops at
least one singular point for sufficiently large values of the
penalization parameter. 

\subsection{Main contributions and organization of the paper}

The first main result establishes a fundamental dichotomy between the
cases $N=2$ and $N\ge3$. It shows that topological obstructions may
prevent the existence of exact sphere-valued decompositions when
$N=2$, leading to the divergence of the minimal energy, whereas for
$N\ge3$ exact decompositions always exist. This property yields uniform
energy bounds, allows the identification of the limiting constrained
problem through $\Gamma$-convergence, and provides an explicit
characterization of the limiting energy.

The second main result investigates the interplay between topology and
regularity. Under suitable topological assumptions, we establish a gap
phenomenon between unrestricted and regular minimizers, implying that,
for sufficiently large values of the penalization parameter, every
minimizer necessarily develops singularities.

The remainder of the paper is organized as follows.
Section~\ref{sec:mainresults} states the main results.
Section~\ref{sec:proofs} is devoted to the analysis of case 
$N=2$, where topological obstructions lead to the divergence of the
minimal energy.
Section~\ref{sec:asymptotic} treats the case $N\ge3$, establishes the
$\Gamma$-convergence of penalized functionals, derives an explicit
representation of the limiting energy, and concludes with the proof of
the gap phenomenon and the occurrence of singular minimizers.

\section{Main Results}
\label{sec:mainresults}
Let  
\begin{equation}\label{Hf}
\mathcal H_f :=
\left\{
U=(u_1,\ldots,u_N)\in\mathcal H:
\sum_{i=1}^{N}u_i=f
\ \text{a.e. in }\Omega
\right\},
\end{equation}
and 

\begin{equation}\label{Wf}
W_f :=
\left\{
(w_1,\ldots,w_N)\in
\bigl(H^1(\Omega,\mathbb R^3)\bigr)^N:
\sum_{i=1}^N w_i=0,\;
\left|\frac1Nf+w_i\right|=1
\ \text{a.e. in }\Omega
\right\}.
\end{equation}

For  \(N\ge3\), we introduce the limiting energy:
\begin{equation}\label{hcn_energy_infinity}
E_\infty(f)
=
\inf_{U\in\mathcal H_f}F_{\infty}(U)
\end{equation}
where \begin{equation}\label{Finfinity}
F_{\infty}:\mathcal H_{f}\to\mathbb R,
\qquad
F_{\infty}(U)
=
\frac12
\sum_{i=1}^{N}
\int_{\Omega}|\nabla u_{i}|^{2}\,dx.
\end{equation}

We are now in a position to state the main results of this paper. The first theorem concerns the case $N=2$ and describes the asymptotic behavior of the minimal energy $E_{\lambda}(f)$ for representative prescribed maps. The second theorem addresses the case $N\geq 3$ and establishes the existence of exact decompositions, the $\Gamma$-convergence of the penalized functionals, an explicit characterization of the limiting energy, and the occurrence of singular minimizers under suitable topological assumptions.

\begin{theorem}
\label{thm:asymptotic}

Assume that $N=2$ and let $f\in H^1(\Omega, S^2) $.

\begin{enumerate}

\item If $f(x)=\frac{x}{|x|},$ then the following assertions hold:
\begin{enumerate}

\item [(a)] The minimal energy satisfies  \begin{equation}\label{N2EnergyDivergence}
\lim_{\lambda\to+\infty}E_{\lambda}(f)=+\infty.
\end{equation}

\item  [(b)] There exists a sequence $\lambda_n$ that tends to $+\infty$ such that, for every $n\in\mathbb N$,
\begin{equation}\label{noneq}
u_{1,n}+u_{2,n}\neq f.
\end{equation}
\end{enumerate}

\item
If $f = C\in S^{2}$ is constant, then
\[
E_{\lambda}(f)=0,
\qquad
\forall \lambda>0 .
\]

\end{enumerate}

\end{theorem}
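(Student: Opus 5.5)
The plan is to interpret $f(x)=x/|x|$ as the planar vortex. That is, $\Omega$ is a two-dimensional domain (identified with a subset of $\mathbb R^2\times\{0\}$) containing the origin, and $f$ takes values in the equatorial circle $S^1\times\{0\}\subset S^2$. The argument then rests on one elementary observation: any exact $N=2$ decomposition $f=u_1+u_2$ with $u_i\in H^1(\Omega,S^2)$ would force $f$ itself to have finite Dirichlet energy. Indeed $\nabla f=\nabla u_1+\nabla u_2$, so
\[
|\nabla f|^2\le 2\bigl(|\nabla u_1|^2+|\nabla u_2|^2\bigr).
\]
For the planar vortex, however, $|\nabla f|^2=1/|x|^2$, whose integral over any disc $B_r(0)\subset\Omega$ diverges logarithmically. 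Hence no exact decomposition with finite energy exists. Note that the pointwise constraint is not what fails: $u_{1,2}=\tfrac12 f\pm\tfrac{\sqrt3}{2}e_3$ are unit vectors summing to $f$. The obstruction is entirely the energy of $f$ near its vortex.

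For (a), I argue by contradiction. Suppose $E_\lambda(f)\not\to+\infty$. Then there is a sequence $\lambda_n\to\infty$ with $E_{\lambda_n}(f)\le M$. Taking minimizers $U^{\lambda_n}=(u_{1,n},u_{2,n})$, the Dirichlet parts are bounded by $2M$ and $|u_{i,n}|=1$, so each $u_{i,n}$ is bounded in $H^1(\Omega,\mathbb R^3)$. After extraction, $u_{i,n}\rightharpoonup u_i$ weakly in $H^1$ and strongly in $L^2$, and a.e. convergence gives $|u_i|=1$. The penalty term satisfies
\[
\int_\Omega|u_{1,n}+u_{2,n}-f|^2\le M/\lambda_n\to0,
\]
so $u_1+u_2=f$ a.e. Weak lower semicontinuity then gives
\[
\tfrac12\int_\Omega|\nabla f|^2\le \int_\Omega\bigl(|\nabla u_1|^2+|\nabla u_2|^2\bigr)\le 4M<\infty,
\]
which contradicts the divergence of $\int_{B_r(0)}|x|^{-2}\,dx$.

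For (b), the same observation applies directly. If some minimizer satisfied $u_{1,n}+u_{2,n}=f$ a.e., then $f\in H^1$ by the inequality above, which is impossible. So in fact every minimizer, for every $\lambda$, violates the constraint, and any sequence $\lambda_n\to\infty$ works. One could also deduce (b) from (a): exact decompositions would make $E_\lambda(f)$ equal to a fixed Dirichlet energy independent of $\lambda$, contradicting the blow-up.

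For part 2, with $f=C\in S^2$ constant, I choose a unit vector $e\perp C$ and set
\[
u_1=\tfrac12 C+\tfrac{\sqrt3}{2}e,\qquad u_2=\tfrac12 C-\tfrac{\sqrt3}{2}e .
\]
These are constant, lie on $S^2$, and sum to $C$. Hence $F_\lambda(u_1,u_2)=0$, and since $F_\lambda\ge0$, we get $E_\lambda(C)=0$ for all $\lambda>0$.

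The main obstacle is reconciling the blanket hypothesis $f\in H^1(\Omega,S^2)$ with the divergence used in (a). In the planar reading the vortex is not in $H^1$ near the origin, and the argument above is exactly where that enters. If instead one insists on an $H^1$ datum, I would replace the energy divergence by a degree obstruction on small circles around the singular point. The decomposition $u_{1,2}=\tfrac12 f\pm w$ with $w\perp f$ and $|w|=\tfrac{\sqrt3}{2}$ must then carry a nontrivial rotation number, forcing energy concentration of order $\log$ at the vortex. Making that lower bound uniform in $\lambda$ is the delicate step.
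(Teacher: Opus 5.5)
There is a genuine gap: your argument is set in the wrong dimension, and its key step yields nothing in the paper's setting. Throughout the paper $\Omega\subset\mathbb{R}^3$, and this is the only reading consistent with the statement you were given. There $x/|x|$ is genuinely $S^2$-valued. Since $|\nabla f|^2=2/|x|^2$ is integrable near the origin in three dimensions, $f$ belongs to $H^1(\Omega,S^2)$, exactly as the theorem assumes; this resolves the tension you flag at the end, without passing to the plane. Consequently, your observation that an exact decomposition forces $\int_\Omega|\nabla f|^2\,dx<\infty$ produces no contradiction. What you prove for part 1 concerns a different problem, an $S^1$-valued datum outside $H^1$. Your remark that the pointwise constraint never fails also relies on $f\perp e_3$. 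In $\mathbb{R}^3$ one has $|\tfrac12 f\pm\tfrac{\sqrt3}{2}e_3|^2=1\pm\tfrac{\sqrt3}{2}\,x_3/|x|$, so these are not unit vectors.

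The missing idea is that the obstruction is topological rather than energetic. Suppose $u_1+u_2=f$ with $|u_1|=|u_2|=|f|=1$. Then $1=|u_1+u_2|^2=2+2u_1\cdot u_2$, so $u_1\cdot u_2=-\tfrac12$, and $m=(u_1-u_2)/\sqrt3\in H^1$ satisfies $|m|=1$ and $m\cdot x=0$ a.e. For a.e. small $r$, the restriction of $m$ to $S_r=\partial B_r$ is an $H^1$ unit tangent vector field on a $2$-sphere, which the hairy ball theorem excludes. The theorem extends to $W^{1,2}$ fields on a two-dimensional sphere because such fields have vanishing mean oscillation. Their averages over geodesic discs of radius $\varepsilon$ are therefore continuous, uniformly close to unit length, and have normal component $O(\varepsilon)$. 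Their tangential projections would then give a continuous nonvanishing tangent field.

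Your fallback via rotation numbers on small circles cannot capture this, since circles in $\mathbb{R}^3\setminus\{0\}$ are contractible. The relevant objects are $2$-spheres around the origin, and no $\lambda$-uniform logarithmic lower bound is needed. Once nonexistence is proved this way, your compactness argument for (a) is the paper's argument and goes through unchanged: bounded energies, weak $H^1$ limits, penalty tending to $0$ in $L^2$, and an exact decomposition in the limit. Your direct proof of (b), that every minimizer for every $\lambda$ violates the constraint, is valid. It is simpler than the paper's route through $\frac{d}{d\lambda}E_\lambda(f)$. Part 2 coincides with the paper's proof.
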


\begin{theorem}\label{thm_singularity}

 Assume that $N\ge3$ and let $ f\in H^1(\Omega,S^2).$

\begin{enumerate}

\item
The admissible class $\mathcal H_f$ defined in \eqref{Hf} is nonempty.

\item
The family $(F_{\lambda})_{\lambda>0}$
$\Gamma$-converges, as $\lambda$ tends to  $+\infty$, with respect to the weak topology of
$\mathcal H$, to the functional $F_{\infty}$.
Consequently,
\begin{equation}
\lim_{\lambda\to+\infty}E_{\lambda}(f)=E_{\infty}(f).
\end{equation}

\item
Furthermore,

\begin{equation}
E_\infty(f)
=
\frac1{2N}
\int_\Omega |\nabla f|^2\,dx
+
\min_{(w_1,\ldots,w_N)\in W_f}
\frac12
\sum_{i=1}^N
\int_\Omega |\nabla w_i|^2\,dx.
\end{equation}

\item Assume that $f$ cannot be decomposed as $f=\sum_{i=1}^N u_i$ where each map $u_i\in H^1(\Omega,S^2)$ is a strong limit of smooth maps in $H^1(\Omega,S^2)$. Then there exists $\lambda_0>0$, depending on $f$ and $N$, such that for all $\lambda \ge \lambda_0$, any minimizer $(u_1^\lambda,\dots,u_N^\lambda)$ of $F_\lambda$ is not regular in $\Omega$. In particular, at least one
component \(u_i^\lambda\) admits a singularity in \(\Omega\). Moreover, 
\begin{equation}\label{GapLargeLambda}
E_{\lambda}(f)<E_{\lambda,\mathrm{reg}}(f).
\end{equation}
\end{enumerate}

\end{theorem}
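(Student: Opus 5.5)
We prove the four items in order; item 1 supplies the competitors for items 2–4.

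\emph{Item 1 (nonemptiness of $\mathcal H_f$).} For $N=3$ we write $f=u_1+u_2+u_3$ with $u_1=f$ and $u_2=-u_3=v$, where $v$ is any map in $H^1(\Omega,S^2)$. The simplest choice is a constant $v\equiv e\in S^2$, so that $(f,e,-e)\in\mathcal H_f$. For general $N\ge3$ we take $u_1=f$ and complete with pairs $(e,-e)$. If $N-1$ is odd, one triple must sum to zero. Here we use $f,\,R f,\,R^2 f$ with $R$ the rotation by $2\pi/3$ about a fixed axis... but this sums to zero only if $f$ lies in the plane orthogonal to the axis. We avoid this issue by using, for odd $N$, the grouping $f=f+\sum(e,-e)$. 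This works because $N-1$ is even when $N$ is odd. For even $N\ge4$, $N-1$ is odd, so we need three unit constants $a,b,c$ with $a+b+c=0$ (the vertices of an equilateral triangle on a great circle), together with pairs $(e,-e)$. Since $N-1=3+2k$, this always works. In every case the fields are constants or $f$, so they belong to $H^1$.

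\emph{Item 2 ($\Gamma$-convergence and convergence of minimal energies).} The liminf inequality goes as follows. Suppose $U_\lambda\rightharpoonup U$ weakly in $\mathcal H$ with $\liminf F_\lambda(U_\lambda)<\infty$. Then $\int|\sum u_{i,\lambda}-f|^2\le C/\lambda\to0$. By Rellich compactness we get strong $L^2$ convergence, hence $U\in\mathcal H_f$ and the sphere constraint passes to the a.e. limit. Weak lower semicontinuity of the Dirichlet energy then gives the inequality; if $U\notin\mathcal H_f$ the same argument shows the liminf is $+\infty$. The recovery sequence is the constant sequence $U_\lambda=U$ for $U\in\mathcal H_f$, since the penalty vanishes. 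Item 1 gives the uniform bound $E_\lambda(f)\le F_\infty(U_0)$ for a fixed $U_0\in\mathcal H_f$, so minimizers are bounded in $\mathcal H$ and we have equicoercivity. The fundamental theorem of $\Gamma$-convergence then yields $E_\lambda\to E_\infty$; here $E_\infty$ is attained by the direct method, because $\mathcal H_f$ is weakly closed.

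\emph{Item 3 (formula for $E_\infty$).} We write $u_i=\frac1N f+w_i$. The map $U\mapsto W$ is a bijection $\mathcal H_f\to W_f$. Expanding the energy gives
\[
\sum_i|\nabla u_i|^2=\frac1N|\nabla f|^2+\frac2N\nabla f:\nabla\Big(\sum_i w_i\Big)+\sum_i|\nabla w_i|^2 .
\]
The middle term vanishes because $\sum_i w_i=0$. Taking the infimum and using attainment from item 2 gives the formula with a minimum.

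\emph{Item 4 (gap phenomenon and singular minimizers).} We argue by contradiction, and we expect this step to be the main obstacle. Assume that along a sequence $\lambda_n\to\infty$ we have $E_{\lambda_n}(f)=E_{\lambda_n,\mathrm{reg}}(f)$, or that some minimizer is regular. In either case there exist $U_n\in C^1(\overline\Omega,S^2)^N$ (or smooth approximants) with $F_{\lambda_n}(U_n)\le E_{\lambda_n}(f)+1/n\le E_\infty(f)+1$. Up to a subsequence, $U_n\rightharpoonup U\in\mathcal H_f$ by the liminf argument. Moreover, $F_\infty(U)\le\liminf F_{\lambda_n}(U_n)\le E_\infty(f)$, so $U$ is a minimizer and there is no loss of energy. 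Weak convergence combined with convergence of the norms then gives $\|\nabla u_{i,n}\|_2\to\|\nabla u_i\|_2$ for each $i$. To see this, we use that the penalty term tends to $0$, and that separate lower semicontinuity for each $i$ together with convergence of the sum forces each term to converge. Hence $U_n\to U$ strongly in $H^1$, so each $u_i$ is a strong limit of smooth $S^2$-valued maps, which contradicts the hypothesis. For a regular minimizer $U^\lambda$, we use that $C^1$ (or continuous $H^1$) sphere-valued maps are strong limits of smooth ones, so minimizers being regular at arbitrarily large $\lambda$ again gives such a sequence. This delicate point needs care, namely the density of smooth maps among regular ones via mollification and projection. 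The strict inequality $E_\lambda<E_{\lambda,\mathrm{reg}}$ for $\lambda\ge\lambda_0$ follows from the same contradiction argument.
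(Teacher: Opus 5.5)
Your proposal is correct and follows essentially the same route as the paper: $f$ plus constant unit vectors summing to zero for item~1, then for item~2 the liminf via the constraint-limit compactness, the constant recovery sequence and equicoercivity. Item~3 uses the same splitting $u_i=\frac1N f+w_i$ with vanishing cross term, and your item~4 is the paper's compactness argument on regular near-minimizers in contrapositive form (using $E_{\lambda_n}(f)\le E_\infty(f)$ to exclude energy loss), which has the side benefit of making the step from $E_{\lambda_n}(f)<E_{\lambda_n,\mathrm{reg}}(f)+\varepsilon_n$ to a strict gap for all $\lambda\ge\lambda_0$ fully rigorous.
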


\section{Proof of Theorem \ref{thm:asymptotic} (Case \(N=2\))}
\label{sec:proofs}


For \(N=2\), the behavior of the minimization problem differs
substantially from that of the regime \(N\ge3\). Indeed, the
constraint
\[
u_1+u_2=f
\]
 cannot, in general, be realized by sphere-valued Sobolev maps. Topological obstructions may prevent the existence of exact decompositions, leading to the divergence of the minimal energy, as $\lambda$ tends to $+\infty$. On the other hand, for sufficiently regular
data, such as constant maps, the constraint can be satisfied exactly, and the penalization term vanishes identically.\\

The first result establishes the existence of a topological
obstruction for the radial map
\[
f(x)=\frac{x}{|x|},
\]
which implies the blow-up of the minimal energy in the asymptotic
regime when $\lambda$ tends to  $+\infty$. We then show that this obstruction
disappears for constant maps, for which the minimum energy is identically equal to zero. \\


The following propositions make these statements precise.

\begin{proposition}\label{N=2noexistence}
Let $\Omega\subset \mathbb{R}^3$ be an open set that contains the origin. Then there do not exist maps $u_1,u_2\in H^1(\Omega,S^2)$ such that
\[
u_1+u_2=\frac{x}{|x|}
\qquad \text{a.e. in }\Omega.
\]
\end{proposition}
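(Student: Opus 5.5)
The plan is to reduce the statement to the hairy ball theorem on small spheres centred at the origin, using the rigidity of the equation $u_1+u_2=f$ for unit vectors. Suppose by contradiction that $u_1,u_2\in H^1(\Omega,S^2)$ satisfy $u_1+u_2=f:=x/|x|$ a.e. Since $|u_2|^2=|f-u_1|^2=1$ and $|f|=|u_1|=1$, we get $\langle u_1,f\rangle=\tfrac12$ a.e. Hence we can write
\[
u_1=\tfrac12 f+\tfrac{\sqrt3}{2}\,v,\qquad u_2=\tfrac12 f-\tfrac{\sqrt3}{2}\,v,\qquad v:=\tfrac{1}{\sqrt3}(u_1-u_2).
\]
Here $v\in H^1(\Omega,\mathbb R^3)$, $|v|=1$ and $\langle v,f\rangle=0$ a.e. Thus $v$ is an $H^1$ unit vector field which, at a.e. point $x\neq0$, is tangent to the sphere $\partial B_{|x|}$.

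Next I would slice. Fix $\rho>0$ with $B_\rho\subset\Omega$. Writing $\int_{B_\rho}|\nabla v|^2dx=\int_0^\rho\int_{\partial B_r}|\nabla v|^2\,d\sigma\,dr$ and using Fubini, for a.e. $r\in(0,\rho)$ the restriction $v_r:=v|_{\partial B_r}$ lies in $H^1(\partial B_r,\mathbb R^3)$. Its tangential gradient is controlled by the full gradient, and $v_r$ still satisfies $|v_r|=1$ and $v_r(y)\perp y$ a.e. on $\partial B_r$. After rescaling to $r=1$, we obtain an $H^1$ unit tangent vector field $\tau$ on $S^2$, that is, an $H^1$ section of the unit tangent circle bundle $UTS^2\to S^2$.

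The last step, which I expect to be the main obstacle, is to rule out such a field. The difficulty is that $H^1$ maps on a two-dimensional surface need not be continuous, so the hairy ball theorem does not apply directly. I would use the critical-dimension density result: in dimension $2$ with $p=2$, smooth maps are strongly dense in $H^1(M^2,\mathcal N)$ for any compact target $\mathcal N$ (Schoen--Uhlenbeck; in dimension $2$ there is no topological obstruction). Since $UTS^2$ is locally trivial, with fibre $S^1$ over coordinate patches, I would apply this chart by chart. Concretely, I would view $\tau$ as an $H^1$ map $S^2\to UTS^2\subset\mathbb R^6$ with $\pi\circ\tau=\mathrm{id}$, approximate it by smooth maps $S^2\to UTS^2$, and then correct the base point by composing with the fibrewise projection. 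Alternatively, I would use a VMO argument: $H^1(S^2)\subset VMO(S^2)$, so averaging $\tau$ over small geodesic discs and then projecting onto the unit tangent sphere gives continuous unit tangent fields, because the averages stay close to the unit circle bundle uniformly. Either construction yields a continuous nowhere-vanishing tangent vector field on $S^2$, which contradicts the hairy ball theorem.

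If the VMO route turns out to be delicate, the fallback is an analytic degree argument. With the frame $(v,\,f\times v,\,f)$ and the connection form $\omega=\langle dv,\,f\times v\rangle\in L^2(\partial B_r)$, I would verify that $d\omega=-f^*dA$ distributionally. Testing against the constant $1$ then gives $0=\int_{\partial B_r}f^*dA=4\pi$, which is again a contradiction.
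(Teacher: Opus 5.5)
Your proof is correct and follows the paper's route: the same splitting $u_{1,2}=\tfrac12 f\pm\tfrac{\sqrt3}{2}v$ with $v=(u_1-u_2)/\sqrt3$ a unit field orthogonal to $x/|x|$, restriction to a.e.\ sphere $\partial B_r$ and rescaling to $S^2$, and a contradiction with the hairy ball theorem. The only difference is in the last step, and it is to your credit. The paper simply asserts that there is no $H^1$ unit tangent field on $S^2$. You note that $H^1(S^2)\not\subset C^0$ and supply a justification. The VMO disc-averaging argument works as written: the normal component of the average is $O(\varepsilon)$, and its modulus tends to $1$ uniformly. So does the distributional identity $d\omega=-dA$ tested against $1$. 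The route ``approximate in $H^1(S^2,UTS^2)$ and correct the base point'' needs the base map to be uniformly close to the identity. That holds for the mollify-and-project construction, but not for an arbitrary smooth approximating sequence.
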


\begin{proof}
Set
\[
f(x)=\frac{x}{|x|}.
\]

Assume by contradiction that there exist
$ u_1,u_2\in H^1(\Omega,S^2)$
satisfying
\[
u_1+u_2=f.
\]

Since $|u_1|=|u_2|=|f|=1$, we have
\[
1=|u_1+u_2|^2
=2+2u_1\cdot u_2,
\]
and therefore
\[
u_1\cdot u_2=-\frac12.
\]

Define
\[
m:=\frac{u_1-u_2}{\sqrt3}.
\]

Then, we have
\[
u_1=\frac12 f+\frac{\sqrt3}{2}m,
\qquad
u_2=\frac12 f-\frac{\sqrt3}{2}m,
\]

with

\[
|m|=1 \quad \mbox{and} \quad 
m(x)\cdot  f(x)=m(x)\cdot \frac{x}{|x|}=0 \quad\text{a.e. in }\Omega.
 \]

Since $u_1,u_2\in H^1(\Omega,\mathbb R^3)$, it follows that
$m\in H^1(\Omega,\mathbb R^3).$

Let $r>0$ be such that $\overline{B_r}\subset\Omega$. By the trace theorem (or the coarea formula), for almost every such $r$,
\[
m_{|S_r}\in H^1(S_r,\mathbb R^3),
\quad
\mbox{where} \quad
S_r=\partial B_r.
\]

\noindent Furthermore,
\[
m(x)\cdot x=0,
\qquad
|m(x)|=1 \quad\text{ a.e. in } S_r .\]

\noindent Therefore, $m_{|S_r}$ is an $H^1$ unit of the tangent vector field in the sphere $S_r$. After rescaling $S_r$ to the unit sphere $S^2$, we obtain a map
\[
\widetilde m\in H^1(S^2,\mathbb R^3)
\]
such that
\[
\widetilde m(x)\cdot x=0,
\qquad
|\widetilde m(x)|=1 \quad\text{ a.e. in } S^2.\]

\noindent In other words, $\widetilde m$ is an $H^1$ unit section of the tangent bundle $TS^2$. However, by the hairy ball theorem (equivalently, by the non-triviality of the tangent bundle of $S^2$), there exists no $H^1$ unit tangent vector field on $S^2$.\\
\noindent This contradiction proves that such maps $u_1$ and $u_2$ cannot exist.
\end{proof}

\bigskip

The next two lemmas are valid for every $N\ge2$. They establish basic structural properties of the minimal energy that will be used repeatedly throughout the rest of the paper. To this end, we introduce the following notation.
For every $U=(u_1,\ldots,u_N)\in \mathcal H$, set
\[
A(U):=\frac12\sum_{i=1}^N\int_\Omega |\nabla u_i|^2\,dx,
\qquad
B(U):=\int_\Omega\left|\sum_{i=1}^N u_i-f\right|^2\,dx.
\]
Then
\[
F_\lambda(U)=A(U)+\lambda B(U),
\qquad
E_\lambda(f)=\min_{U\in \mathcal H}F_\lambda(U).
\]

\begin{lemma}\label{properties}
The map
\[
\lambda\longmapsto E_\lambda(f)
\]
is Lipschitz continuous, nondecreasing, and concave on $(0,+\infty)$.
\end{lemma}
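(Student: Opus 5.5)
The plan is to use that $E_\lambda(f)$ is a pointwise infimum, over $U\in\mathcal H$, of functions that are affine in $\lambda$. For each fixed $U\in\mathcal H$ the map $\lambda\mapsto F_\lambda(U)=A(U)+\lambda B(U)$ is affine with nonnegative slope $B(U)\ge 0$. Monotonicity and concavity then follow almost for free, and Lipschitz continuity needs one uniform bound on $B$ at minimizers.

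\emph{Monotonicity.} Let $0<\lambda<\mu$. For every $U$ we have $F_\lambda(U)\le F_\mu(U)$, since $B(U)\ge0$. Taking the infimum over $U$ gives $E_\lambda(f)\le E_\mu(f)$.

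\emph{Concavity.} Take $t\in[0,1]$ and $\lambda=t\lambda_1+(1-t)\lambda_2$. For every $U$,
\[
F_\lambda(U)=tF_{\lambda_1}(U)+(1-t)F_{\lambda_2}(U)\ge tE_{\lambda_1}(f)+(1-t)E_{\lambda_2}(f).
\]
Applying this at a minimizer $U^\lambda$, which exists by the direct method, yields $E_\lambda(f)\ge tE_{\lambda_1}(f)+(1-t)E_{\lambda_2}(f)$. In other words, an infimum of affine functions is concave.

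\emph{Lipschitz continuity.} For $\lambda<\mu$, test $E_\mu(f)$ with the minimizer $U^\lambda$:
\[
0\le E_\mu(f)-E_\lambda(f)\le F_\mu(U^\lambda)-F_\lambda(U^\lambda)=(\mu-\lambda)B(U^\lambda).
\]
The remaining step is a uniform bound on $B(U^\lambda)$, and this is where the care lies. Since $|u_i|=|f|=1$, pointwise $\left|\sum_i u_i-f\right|\le N+1$. Hence $B(U)\le (N+1)^2|\Omega|$ for every $U\in\mathcal H$, with $\Omega$ bounded. This gives $|E_\mu(f)-E_\lambda(f)|\le (N+1)^2|\Omega|\,|\mu-\lambda|$, i.e.\ a global Lipschitz constant $(N+1)^2|\Omega|$.

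A sharper bound, if wanted, comes from comparing with a constant competitor $U_0=(c,\ldots,c)$. This gives $\lambda B(U^\lambda)\le F_\lambda(U^\lambda)\le F_\lambda(U_0)=\lambda B(U_0)$, so $B(U^\lambda)\le B(U_0)$. The crude pointwise bound already suffices, so I expect no real obstacle. The only point to check is that minimizers exist for every $\lambda>0$, which the paper has already noted via the direct method. Alternatively, one can avoid minimizers entirely by working with $\varepsilon$-minimizers.
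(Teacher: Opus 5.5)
Your proof is correct and follows essentially the same route as the paper: it uses the affine structure $F_\lambda(U)=A(U)+\lambda B(U)$ with $B\ge 0$, the pointwise bound $B(U)\le (N+1)^2|\Omega|$ for the Lipschitz constant, and the argument that an infimum of affine functions is concave. The differences are cosmetic: you get monotonicity from $F_\lambda(U)\le F_\mu(U)$ for every $U$ instead of testing with a minimizer of $F_{\lambda_2}$, and you derive the Lipschitz estimate one-sidedly and combine it with monotonicity, whereas the paper exchanges the roles of $\lambda$ and $\mu$.
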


\begin{proof}
Since $|u_i|=1$ and $|f|=1$ almost everywhere in $\Omega$, we have, for every $U\in \mathcal H$,
\[
\left|\sum_{i=1}^N u_i-f\right|
\le \sum_{i=1}^N |u_i|+|f|
\le N+1.
\]
Therefore,
\[
0\le B(U)\le |\Omega|(N+1)^2=:C.
\]

Let $\lambda,\mu>0$, and let $U_\mu\in\mathcal H$ be a minimizer of $F_\mu$. Then
\[
E_\lambda(f)
\le F_\lambda(U_\mu)
=F_\mu(U_\mu)+(\lambda-\mu)B(U_\mu),
\]
and hence
\[
E_\lambda(f)-E_\mu(f)
\le C|\lambda-\mu|.
\]
Exchanging the roles of $\lambda$ and $\mu$ yields
\[
|E_\lambda(f)-E_\mu(f)|
\le C|\lambda-\mu|,
\]
which proves the Lipschitz continuity.

Now, let $0<\lambda_1<\lambda_2$, and let
$U_{\lambda_2}\in\mathcal H$ be a minimizer of $F_{\lambda_2}$. Since
$B(U_{\lambda_2})\ge0$,
\[
E_{\lambda_1}(f)
\le F_{\lambda_1}(U_{\lambda_2})
=F_{\lambda_2}(U_{\lambda_2})
-(\lambda_2-\lambda_1)B(U_{\lambda_2})
\le E_{\lambda_2}(f).
\]
Thus, $\lambda\mapsto E_\lambda(f)$ is not decreasing.

Finally, let $\lambda_1,\lambda_2>0$, $t\in[0,1]$, and set
\[
\lambda_t:=t\lambda_1+(1-t)\lambda_2.
\]
For every $U\in\mathcal H$,
\[
F_{\lambda_t}(U)
=tF_{\lambda_1}(U)+(1-t)F_{\lambda_2}(U).
\]
Taking the minimum over $U\in\mathcal H$, we obtain
\[
\begin{aligned}
E_{\lambda_t}(f)
&=\min_{U\in\mathcal H}
\left(
tF_{\lambda_1}(U)+(1-t)F_{\lambda_2}(U)
\right)\\
&\ge
t\min_{U\in\mathcal H}F_{\lambda_1}(U)
+(1-t)\min_{U\in\mathcal H}F_{\lambda_2}(U)\\
&=
tE_{\lambda_1}(f)+(1-t)E_{\lambda_2}(f).
\end{aligned}
\]
Hence $\lambda\mapsto E_\lambda(f)$ is concave.
\end{proof}

\begin{lemma}
For each $\lambda>0$, let $U^\lambda=(u_1^\lambda,\ldots,u_N^\lambda)\in\mathcal H$ be a minimizer of $F_\lambda$. Then the map 
$\lambda\longmapsto E_\lambda(f)$ 
is differentiable almost everywhere on $(0,+\infty)$ and satisfies
\begin{equation}\label{derivative}
\frac{d}{d\lambda}E_\lambda(f)
=
\int_\Omega
\left|
\sum_{i=1}^N u_i^\lambda-f
\right|^2\,dx
\qquad\text{for almost every }\lambda>0.
\end{equation}
\end{lemma}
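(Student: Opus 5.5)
By Lemma~\ref{properties}, the map $\lambda\mapsto E_\lambda(f)$ is concave and Lipschitz on $(0,+\infty)$. Hence it is differentiable at every point of $(0,+\infty)$ except at most countably many, and in particular almost everywhere. Rademacher's theorem alone would already give almost-everywhere differentiability. It remains to identify the derivative, and for this I will use the envelope (Danskin-type) argument. The key identity is the one already used in the proof of Lemma~\ref{properties}: for every $U\in\mathcal H$ and every $\lambda,\mu>0$,
\[
F_\mu(U)=F_\lambda(U)+(\mu-\lambda)B(U).
\]

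Fix $\lambda>0$ at which $E_\cdot(f)$ is differentiable, and let $U^\lambda$ be the chosen minimizer of $F_\lambda$. Testing $E_\mu(f)$ with the competitor $U^\lambda$ gives, for every $\mu>0$,
\[
E_\mu(f)\le F_\mu(U^\lambda)=E_\lambda(f)+(\mu-\lambda)B(U^\lambda).
\]
For $\mu=\lambda+h$ with $h>0$, dividing by $h$ and letting $h\to0^+$ yields $\frac{d}{d\lambda}E_\lambda(f)\le B(U^\lambda)$. For $\mu=\lambda-h$ with $0<h<\lambda$, the same inequality reads
\[
E_\lambda(f)-E_{\lambda-h}(f)\ge h\,B(U^\lambda).
\]
Dividing by $h$ and letting $h\to0^+$ gives $\frac{d}{d\lambda}E_\lambda(f)\ge B(U^\lambda)$. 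The two inequalities together give \eqref{derivative} at every point of differentiability, whichever minimizer $U^\lambda$ is selected.

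In other words, $B(U^\lambda)$ lies in the superdifferential $[E'_+(\lambda),E'_-(\lambda)]$ of the concave function $\lambda\mapsto E_\lambda(f)$, and this interval collapses to a single point wherever the function is differentiable. As a by-product, at such $\lambda$ the quantity $B(U^\lambda)$ does not depend on the choice of minimizer.

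I do not expect a genuine obstacle here. The only points to check are that the one-sided limits exist, which follows from concavity, and that the inequality used for the left derivative has the correct sign. Both are routine. No measurability of the selection $\lambda\mapsto U^\lambda$ is needed, since the argument is pointwise in $\lambda$.
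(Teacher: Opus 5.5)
Your proof is correct and follows essentially the same route as the paper: test $E_{\lambda\pm h}(f)$ with the fixed minimizer $U^\lambda$ to obtain $E'_+(\lambda)\le B(U^\lambda)\le E'_-(\lambda)$, and then conclude at points of differentiability. The differences are minor. You get a.e. differentiability from concavity, whereas the paper uses monotonicity. Your bound $E_{\lambda-h}(f)\le F_{\lambda-h}(U^\lambda)=E_\lambda(f)-hB(U^\lambda)$ is the correct justification of the left-hand inequality, which the paper describes only loosely as ``replacing $\lambda$ by $\lambda-h$.''
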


\begin{proof}
Let
\[
R_\lambda
=
\int_\Omega
\left|
\sum_{i=1}^N u_i^\lambda-f
\right|^2dx.
\]

\noindent From Lemma ~\ref{properties}, for every $h>0$, we have
\begin{equation}\label{ineq_hcn}
\frac{E_{\lambda+h}(f)-E_\lambda(f)}{h}
\le
R_\lambda.
\end{equation}
Since $E_\lambda(f)$ is continuous and strictly increasing, it is differentiable a.e. $\lambda>0.$
Therefore, passing to the limit as $h$ tends to $0^+$ gives
\begin{equation}\label{droit}
R_\lambda
\ge
\lim_{h\to0^+}
\frac{E_{\lambda+h}(f)-E_\lambda(f)}{h}
=
\frac{d^+}{d\lambda}E_\lambda(f).   
\end{equation}

\noindent Similarly, applying inequality \eqref{ineq_hcn} with $\lambda-h$ instead of $\lambda$,
\[
R_\lambda
\le
\frac{E_\lambda(f)-E_{\lambda-h}(f)}{h}.
\]

\noindent Passing to the limit as $h$ tends to $0^+$ yields
\begin{equation}\label{gauche}
R_\lambda
\le
\lim_{h\to0^+}
\frac{E_\lambda(f)-E_{\lambda-h}(f)}{h}
=
\frac{d^-}{d\lambda}E_\lambda(f).
\end{equation}

\noindent Combining \eqref{droit} and \eqref{gauche}, and using the fact that $E_\lambda(f)$ is differentiable almost everywhere in $(0,+\infty)$, yields \eqref{derivative}. This completes the proof.
\end{proof}

\medskip

The following proposition establishes the first two assertions of Theorem \ref{thm:asymptotic} for the case $N=2$.

\begin{proposition}\label{prop:N2Divergence}
Let \(\Omega\subset\mathbb R^{3}\) be an open set that contains the origin. Assume that
\[
N=2,
\qquad
f(x)=\frac{x}{|x|}.
\]
Then the following assertions hold:
\begin{enumerate}

\item The minimal energy satisfies  \begin{equation}\label{N2EnergyDivergence}
\lim_{\lambda\to+\infty}E_{\lambda}(f)=+\infty.
\end{equation}

\item  There exists a sequence $\lambda_n$ that tends to $+\infty$ such that, for every $n\in\mathbb N$,
\begin{equation}\label{noneq}
u_{1,n} +u_{2,n} \neq f.
\end{equation}
\end{enumerate}

\end{proposition}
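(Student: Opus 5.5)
Part (a) will be proved by contradiction, using Lemma~\ref{properties} and Proposition~\ref{N=2noexistence}. By Lemma~\ref{properties}, $\lambda\mapsto E_\lambda(f)$ is nondecreasing, so the limit $L:=\lim_{\lambda\to+\infty}E_\lambda(f)\in[0,+\infty]$ exists. Suppose $L<+\infty$. Take any sequence $\lambda_n\to+\infty$ and minimizers $U^{\lambda_n}=(u_{1,n},u_{2,n})$. Then
\[
\frac12\sum_{i=1}^2\int_\Omega|\nabla u_{i,n}|^2\,dx\le E_{\lambda_n}(f)\le L,
\qquad
\int_\Omega|u_{1,n}+u_{2,n}-f|^2\,dx\le \frac{L}{\lambda_n}\longrightarrow 0 .
\]
Since $|u_{i,n}|=1$ and $\Omega$ is bounded, the sequences are bounded in $H^1(\Omega,\mathbb R^3)$. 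After extracting a subsequence, $u_{i,n}\rightharpoonup u_i$ weakly in $H^1$, strongly in $L^2$ and a.e.; here I use Rellich on a bounded domain, and if $\Omega$ is irregular I work locally on a ball $\overline{B_r}\subset\Omega$, which is all that Proposition~\ref{N=2noexistence} needs. The a.e. convergence gives $|u_i|=1$, so $u_i\in H^1(\Omega,S^2)$. The strong $L^2$ convergence and the penalization bound give $u_1+u_2=f$ a.e. This contradicts Proposition~\ref{N=2noexistence}, applied on $\Omega$ or on a ball around the origin. Hence $L=+\infty$.

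For part (b), I argue directly. If there were some $\lambda$ with $u_1^\lambda+u_2^\lambda=f$ a.e., this would again contradict Proposition~\ref{N=2noexistence}. So for \emph{every} $\lambda>0$ and every minimizer, $u_1^\lambda+u_2^\lambda\neq f$ on a set of positive measure. In particular this holds along any sequence $\lambda_n\to+\infty$, for example $\lambda_n=n$.

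As a quantitative complement, I can also use the derivative formula \eqref{derivative}. Since $E_\lambda(f)\to+\infty$ and $E_\lambda$ is Lipschitz, hence absolutely continuous, the a.e. derivative $R_\lambda=\int_\Omega|u_1^\lambda+u_2^\lambda-f|^2\,dx$ cannot vanish for a.e. large $\lambda$. Otherwise $E_\lambda$ would be eventually constant, contradicting the divergence. This yields a sequence $\lambda_n\to\infty$ with $R_{\lambda_n}>0$, which is \eqref{noneq} in the stronger sense of a positive $L^2$ defect.

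The main obstacle is the compactness step in (a). I need to make sure the weak limit really stays sphere-valued and satisfies the exact constraint. Both facts come from strong $L^2$ (and a.e.) convergence, which requires Rellich. To avoid any regularity assumption on $\partial\Omega$, I will restrict to a ball $B_r$ with $\overline{B_r}\subset\Omega$ centered at $0$ and apply Proposition~\ref{N=2noexistence} with $\Omega$ replaced by $B_r$.
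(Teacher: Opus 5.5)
Your argument is correct. Part (a) is the paper's proof: bounded energy along $\lambda_n\to+\infty$, weak $H^1$ and strong $L^2$ compactness, the exact constraint in the limit, and a contradiction with Proposition~\ref{N=2noexistence}. You add two sensible refinements. You use monotonicity from Lemma~\ref{properties} to define the limit. You also localize to a ball $B_r\ni 0$, so that Rellich needs no regularity of $\partial\Omega$; this is legitimate because Proposition~\ref{N=2noexistence} holds for any open set containing the origin.

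Part (b) is where you depart from the paper. You note that Proposition~\ref{N=2noexistence} already forbids $u_1^\lambda+u_2^\lambda=f$ a.e.\ for each fixed $\lambda$. So the non-equality holds for every $\lambda>0$ and every minimizer, without using (a) at all. The paper instead combines the divergence from (a) with the a.e.\ derivative formula \eqref{derivative} and argues by contradiction on the decay of $R_\lambda=\int_\Omega|u_1^\lambda+u_2^\lambda-f|^2\,dx$. If $\lambda^\alpha R_\lambda<M$ held for a.e.\ large $\lambda$, for some $\alpha>1$ and $M>0$, then integrating $E'=R_\lambda$ would keep $E_\lambda(f)$ bounded. (The paper states this negation as ``for every $\lambda>0$''; it should read ``for a.e.\ sufficiently large $\lambda$''.)

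The paper's route buys a quantitative lower bound $R_{\lambda_n}\ge M\lambda_n^{-\alpha}$ along some sequence $\lambda_n\to+\infty$, at the price of depending on (a) and on the differentiability lemma. Your route gives the qualitative statement for every $\lambda$, but no rate. Your ``quantitative complement'' is therefore redundant and not stronger. $R_\lambda>0$ is exactly the statement $u_1^\lambda+u_2^\lambda\neq f$ in $L^2$, which your direct argument already gives for every $\lambda$, and it recovers less than the paper's polynomial bound.
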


\begin{proof}
We first prove \eqref{N2EnergyDivergence}.\\
Assume by contradiction that there exist \(\lambda_{n}\) tending to $+\infty $ and maps $u_{1,n},u_{2,n}\in H^{1}(\Omega,S^{2})$ such that
\begin{equation}\label{BoundedEnergyN2Contradiction}\nonumber
F_{\lambda_{n}}(u_{1,n},u_{2,n})\le C
\end{equation}
for some constant \(C>0\) independent of \(n\). Then
\begin{equation}\label{DirichletBoundsN2}\nonumber
\int_{\Omega}|\nabla u_{1,n}|^{2}\,dx\le C,
\qquad
\int_{\Omega}|\nabla u_{2,n}|^{2}\,dx\le C,
\end{equation}
and
\begin{equation}\label{PenaltyBoundN2}\nonumber
\int_{\Omega}
|u_{1,n}+u_{2,n}-f|^{2}\,dx
\le
\frac{C}{\lambda_{n}}.
\end{equation}
Hence
\begin{equation}\label{L2convergence}
u_{1,n}+u_{2,n}\to f
\quad\text{strongly in }L^{2}(\Omega).
\end{equation}
Moreover, \((u_{1,n})\) and \((u_{2,n})\) are bounded in \(H^{1}(\Omega,S^{2})\). Therefore, up to a subsequence, there exist
\[
u_{1},u_{2}\in H^{1}(\Omega,S^{2})
\]
such that
\begin{equation}\label{WeakConvN2}\nonumber
u_{1,n}\rightharpoonup u_{1},
\qquad
u_{2,n}\rightharpoonup u_{2}
\quad\text{weakly in }H^{1}(\Omega,S^{2}),
\end{equation}
and
\begin{equation}\label{StrongConvN2}
u_{1,n}\to u_{1},
\qquad
u_{2,n}\to u_{2}
\quad\text{strongly in }L^{2}(\Omega).
\end{equation}
By \eqref{L2convergence}, \eqref{StrongConvN2} and uniqueness of the limit in \(L^{2}(\Omega)\),
\begin{equation}\label{LimitConstraintN2}\nonumber
u_{1}+u_{2}=f
\quad\text{a.e. in }\Omega.
\end{equation}
This contradicts Proposition~\ref{N=2noexistence}.\\

\noindent We now prove the second assertion.\\
We will show that there exist $\alpha>1$, a constant $M>0$, and a sequence $\lambda_n\longrightarrow+\infty$ such that
\begin{equation}\label{eq:lower-bound-penalty-second}\nonumber
\lambda_n^\alpha
\int_\Omega
\left|
u_{\lambda_n}^1+u_{\lambda_n}^2-f
\right|^2\,dx
\ge M,
\qquad \forall n\in\mathbb N.
\end{equation}

\noindent Otherwise, for every $M>0$, for every $\lambda>0$ and for every
$\alpha>1$, one has
\begin{equation}\label{eq:upper-bound-penalty-second}
\lambda^\alpha
\int_\Omega
\left|
u_\lambda^1+u_\lambda^2-f
\right|^2\,dx
< M.
\end{equation}

\noindent Since, for almost every $\lambda>0$,
\begin{equation}\label{eq:derivative-energy-second}\nonumber
E'(\lambda)
=
\int_\Omega
\left|
u_\lambda^1+u_\lambda^2-f
\right|^2\,dx,
\end{equation}

\noindent Using
\begin{equation}\label{eq:energy-integral-second}\nonumber
E(\lambda)
=
\int_1^\lambda E'(r)\,dr
+
E(1),
\end{equation}

\noindent from \eqref{eq:upper-bound-penalty-second}, we obtain
\begin{equation}\label{eq:energy-integral-estimate-second}\nonumber
E(\lambda)
<
\int_1^\lambda
\frac{M}{r^\alpha}\,dr
+
E(1).
\end{equation}

\noindent Consequently,
\begin{equation}\label{eq:energy-power-estimate-second}\nonumber
E(\lambda)
<
M
\frac{\lambda^{1-\alpha}-1}{1-\alpha}
+
E(1).
\end{equation}

\noindent Since $1-\alpha<0,$ we have
\begin{equation}\label{eq:uniform-bound-energy-second}\nonumber
M
\frac{1-\lambda^{1-\alpha}}{\alpha-1}
+
E(1)
\leq C.
\end{equation}

\noindent This contradicts
\begin{equation}\label{eq:energy-divergence-second}\nonumber
\lim_{\lambda\to+\infty}E(\lambda)=+\infty.
\end{equation}

\noindent Therefore, there exist $\alpha>1$, a constant $M>0$, and a sequence $\lambda_n\longrightarrow+\infty$ 
such that, for every $n\in\mathbb N$,
\begin{equation}\label{eq:positive-penalty-second}\nonumber
\int_\Omega
\left|
u_{\lambda_n}^1+u_{\lambda_n}^2-f
\right|^2\,dx
\geq
\frac{M}{\lambda_n^\alpha}
>
0.
\end{equation}

\noindent Consequently, \eqref{noneq} holds.

\end{proof}

The final assertion of Theorem \ref{thm:asymptotic} is established by the following proposition.

\begin{proposition}\label{prop:N2ConstantDatum}
Let \(\Omega\subset\mathbb R^{3}\) be a bounded domain and let
\[
f(x)=C,
\qquad
|C|=1.
\]
Then there exist maps
\[
u_{1},u_{2}\in H^{1}(\Omega,S^{2})
\]
such that
$$u_1+u_2=f \quad \mbox{ and }\quad E_\lambda(f)=0.$$
\end{proposition}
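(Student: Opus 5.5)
The plan is to build an explicit \emph{constant} decomposition of $C$ into two unit vectors and use it as a competitor with zero energy. Since $F_\lambda\ge 0$ for every $U\in\mathcal H$, it suffices to exhibit some $U=(u_1,u_2)\in\mathcal H$ with $F_\lambda(U)=0$. This gives $E_\lambda(f)=0$ for every $\lambda>0$ at once.

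\textbf{Constructing the decomposition.} The computation in the proof of Proposition~\ref{N=2noexistence} shows what any exact decomposition must look like: $u_1\cdot u_2=-\tfrac12$, and
\[
u_1=\tfrac12 C+\tfrac{\sqrt3}{2}m,\qquad u_2=\tfrac12 C-\tfrac{\sqrt3}{2}m,
\]
where $m$ is a unit vector orthogonal to $C$. For constant data the obstruction from the hairy ball theorem disappears, because we may take $m$ constant. So I fix any unit vector $m\in S^2$ with $m\cdot C=0$; one exists since $C\neq 0$ and $\mathbb R^3$ has dimension $3$. I then define $u_1,u_2$ as the constant maps above.

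\textbf{Checking admissibility.} By orthogonality of $C$ and $m$, for $i=1,2$,
\[
|u_i|^2=\tfrac14|C|^2+\tfrac34|m|^2=1.
\]
Constant maps lie in $H^1(\Omega,\mathbb R^3)$ because $\Omega$ is bounded. Hence $u_1,u_2\in H^1(\Omega,S^2)$, and by construction $u_1+u_2=C=f$ everywhere in $\Omega$.

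\textbf{Conclusion.} Both gradients vanish and the penalization term is identically zero, so $F_\lambda(u_1,u_2)=0$. Therefore
\[
0\le E_\lambda(f)\le F_\lambda(u_1,u_2)=0\qquad\text{for every }\lambda>0,
\]
and $(u_1,u_2)$ is in fact a minimizer. There is no real obstacle in this argument. The only point needing care is remembering that $\Omega$ is bounded, so that constant maps have finite $L^2$ norm; this is exactly why the proposition assumes a bounded domain.
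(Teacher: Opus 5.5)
Your proof is correct and is essentially the paper's argument: the paper rotates $C$ to $(1,0,0)$ and takes the constant maps $C_{1,2}=\bigl(\tfrac12,\pm\tfrac{\sqrt3}{2},0\bigr)$, which is exactly your choice $\tfrac12 C\pm\tfrac{\sqrt3}{2}m$ with $m$ a unit vector orthogonal to $C$. Your coordinate-free phrasing and your explicit use of $F_\lambda\ge 0$ to conclude $E_\lambda(f)=0$ (a step the paper leaves implicit) are only presentational differences.
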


\begin{proof}
Since \(|C|=1\), we may assume, up to a rotation, that
\begin{equation}\label{CChoice}\nonumber
C=(1,0,0).
\end{equation}
Consider
\begin{equation}\label{C1C2Definition}\nonumber
C_{1}
=
\left(
\frac12,
\frac{\sqrt3}{2},
0
\right),
\qquad
C_{2}
=
\left(
\frac12,
-\frac{\sqrt3}{2},
0
\right).
\end{equation}
Then
\begin{equation}\label{C1C2Norms}\nonumber
|C_{1}|=|C_{2}|=1 \quad \mbox{and} \quad  C_{1}+C_{2}=C.
\end{equation}
Define
\begin{equation}\label{ConstantMapsN2}\nonumber
u_{1}(x):=C_{1},
\qquad
u_{2}(x):=C_{2}.
\end{equation}
Then
\[
u_{1},u_{2}\in H^{1}(\Omega,S^{2}),
\qquad
\nabla u_{1}=\nabla u_{2}=0.
\]
Moreover,
\begin{equation}\label{ConstantMapsConstraint}\nonumber
u_{1}+u_{2}=C=f.
\end{equation}
Consequently,
\begin{equation}\label{ZeroEnergyConstantN2}\nonumber
F_{\lambda}(u_{1},u_{2})=0.
\end{equation}
Thus \(E_{\lambda}(f)=0\).
\end{proof}

\section{Proof of Theorem \ref{thm_singularity} (Case \(N \geq 3\))}\label{sec:asymptotic}

We now turn to the proof of Theorem \ref{thm_singularity}, which is established in several steps.\\

The first assertion follows from the next proposition, which also provides a uniform upper bound on the minimal energy.

\begin{proposition}
\label{lem:uniform-bound} Assume that \(N\ge3\), and let \[ f\in H^{1}(\Omega,S^{2}). \] Then there exists \[ (u_{1},\ldots,u_{N})\in H^{1}(\Omega,S^{2})^{N} \] such that \begin{equation}\label{ConstraintExact} \sum_{i=1}^{N}u_{i}=f \qquad\text{a.e. in }\Omega. \end{equation} Consequently, there exists a constant \(C>0\), independent of \(\lambda\), such that \begin{equation}\label{UniformBoundElambda} E_{\lambda}(f)\le C, \qquad \forall\,\lambda>0. \end{equation} \end{proposition}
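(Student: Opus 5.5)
The plan is to construct an explicit exact decomposition in which one component carries all of $f$ and the remaining $N-1$ components are constant unit vectors summing to zero. Such a configuration satisfies \eqref{ConstraintExact} trivially. Its energy involves only the Dirichlet energy of $f$, which immediately gives the uniform bound \eqref{UniformBoundElambda}. No topological argument is needed, in contrast with Proposition~\ref{N=2noexistence}: the extra degrees of freedom available when $N\ge3$ are used to cancel the auxiliary fields pointwise.

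Concretely, I will set $u_1:=f$ and look for constant vectors $c_2,\ldots,c_N\in S^2$ with $\sum_{i=2}^N c_i=0$. Since $N\ge3$, we have $N-1\ge2$, and there are two cases.

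If $N$ is odd, then $N-1$ is even. I group the indices $2,\ldots,N$ into $(N-1)/2$ pairs and assign $(e,-e)$ to each pair, for a fixed $e\in S^2$.

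If $N$ is even, then $N\ge4$ and $N-1\ge3$ is odd. I choose three unit vectors in a fixed plane at mutual angles $2\pi/3$, for instance
\[
a_1=(1,0,0),\qquad a_2=\Bigl(-\tfrac12,\tfrac{\sqrt3}{2},0\Bigr),\qquad a_3=\Bigl(-\tfrac12,-\tfrac{\sqrt3}{2},0\Bigr),
\]
which satisfy $a_1+a_2+a_3=0$. The remaining $N-4$ indices form an even number and are filled with pairs $(e,-e)$ as before.

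Next I verify admissibility. Since $\Omega$ is bounded, constant maps belong to $H^1(\Omega,S^2)$ and have zero gradient. Also $u_1=f\in H^1(\Omega,S^2)$ by hypothesis. Hence $U=(f,c_2,\ldots,c_N)\in\mathcal H$ and $\sum_i u_i=f$ everywhere, which shows $\mathcal H_f\neq\emptyset$.

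Finally, the penalization term vanishes identically for this $U$, so for every $\lambda>0$,
\[
E_\lambda(f)\le F_\lambda(U)=\frac12\int_\Omega|\nabla f|^2\,dx=:C.
\]
This constant is finite and independent of $\lambda$. If one insists on $C>0$, one may replace it by $C+1$.

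The only step requiring care is the parity case distinction for $N$. It is exactly here that $N\ge3$ enters: when $N=2$ there is a single auxiliary field, which cannot vanish since it is unit-valued, so the construction fails. That failure is consistent with Proposition~\ref{N=2noexistence}. I expect no genuine obstacle beyond this.
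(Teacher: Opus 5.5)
Your proof is correct and follows the paper's argument: take $u_1=f$ and the other $N-1$ components to be constant unit vectors summing to zero, so the penalty vanishes and $E_\lambda(f)\le\frac12\int_\Omega|\nabla f|^2\,dx$. The paper uses the $(N-1)$-th roots of unity in the plane $\{x_3=0\}$, which avoids your parity case split; your remark about replacing $C$ by $C+1$ also covers the constant-$f$ case, where the paper's constant is $0$ and so does not meet the stated requirement $C>0$.
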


\begin{proof}
Set
\begin{equation}\label{u1f}
u_{1}=f.
\end{equation}
Since \(N\ge3\), choose constant vectors
$p_{1},\ldots,p_{N-1}\in S^{2}$
such that
\begin{equation}\label{SumPkZero}
\sum_{k=1}^{N-1}p_{k}=0.
\end{equation}
For instance, take
\begin{equation}\label{PkDefinition}\nonumber
p_{k}
=
\left(
\cos\frac{2\pi(k-1)}{N-1},
\sin\frac{2\pi(k-1)}{N-1},
0
\right),
\qquad
k=1,\ldots,N-1.
\end{equation}
Define
\begin{equation}\label{UkDefinition}
u_{k+1}=p_{k},
\qquad
k=1,\ldots,N-1.
\end{equation}
Then, by \eqref{u1f}, \eqref{SumPkZero} and \eqref{UkDefinition},
\begin{equation}\label{ExactSumProof}\nonumber
\sum_{i=1}^{N}u_{i}
=
f+\sum_{k=1}^{N-1}p_{k}
=
f.
\end{equation}
Thus
\[
(u_{1},\ldots,u_{N})\in H^{1}(\Omega,S^{2})^{N}.
\]
Moreover, all fields except \(u_{1}=f\) are constant. Hence
\begin{equation}\label{EnergyExactConfiguration}\nonumber
F_{\lambda}(u_{1},\ldots,u_{N})
=
\frac12
\int_{\Omega}|\nabla f|^{2}\,dx.
\end{equation}
Therefore,
\begin{equation}\label{UniformBoundProof}\nonumber
E_{\lambda}(f)
\le
\frac12
\int_{\Omega}|\nabla f|^{2}\,dx.
\end{equation}
Thus \eqref{UniformBoundElambda} holds with
\begin{equation}\label{ConstantUniformBound}\nonumber
C=
\frac12
\int_{\Omega}|\nabla f|^{2}\,dx.
\end{equation}
\end{proof}

\begin{lemma}\label{lem:constraint-limit}
Assume that \(N\ge3\). Let \(f\in H^{1}(\Omega,S^{2})\), and let
\((\lambda_{n})\) be a sequence of positive numbers such that $\lambda_{n}$ tends to  $+\infty.$ 
Suppose that $(u_{1}^{n},\ldots,u_{N}^{n})\in H^{1}(\Omega,S^{2})^{N}$ 
satisfies
\begin{equation}\label{UniformEnergySequence}
F_{\lambda_{n}}(u_{1}^{n},\ldots,u_{N}^{n})\le C,
\end{equation}
for some constant \(C>0\) independent of \(n\). Then, up to the extraction of a subsequence, there exist maps
\[
\tilde{u}_{1},\ldots,\tilde{u}_{N}\in H^{1}(\Omega,S^{2})
\]
such that, for every \(i=1,\ldots,N\),
\begin{equation}\label{WeakConvergenceUi}\nonumber
u_{i}^{n}\rightharpoonup \tilde{u}_{i}
\quad\text{weakly in }H^{1}(\Omega),
\end{equation}
\begin{equation}\label{StrongConvergenceUi}\nonumber
u_{i}^{n}\to \tilde{u}_{i}
\quad\text{strongly in }L^{2}(\Omega),
\end{equation}
and
\begin{equation}\label{LimitConstraint}
\sum_{i=1}^{N}\tilde{u}_{i}=f
\quad\text{a.e. in }\Omega.
\end{equation}
\end{lemma}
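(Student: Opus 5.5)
The argument follows the compactness step already used in the proof of Proposition~\ref{prop:N2Divergence}; the assumption $N\ge3$ plays no role here. We split the uniform bound \eqref{UniformEnergySequence} into its two parts. Since both terms of $F_{\lambda_n}$ are nonnegative, we get
\[
\frac12\sum_{i=1}^N\int_\Omega|\nabla u_i^n|^2\,dx\le C,
\qquad
\int_\Omega\Bigl|\sum_{i=1}^N u_i^n-f\Bigr|^2dx\le \frac{C}{\lambda_n}.
\]
Also $|u_i^n|=1$ a.e., so $\|u_i^n\|_{L^2(\Omega)}^2=|\Omega|$ (here $\Omega$ is bounded). Hence each sequence $(u_i^n)_n$ is bounded in $H^1(\Omega,\mathbb R^3)$.

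Next we extract convergent subsequences. By weak compactness in the Hilbert space $H^1$, applied successively to the finitely many indices $i=1,\dots,N$, there is a common subsequence with $u_i^n\rightharpoonup \tilde u_i$ weakly in $H^1(\Omega,\mathbb R^3)$. By the Rellich--Kondrachov theorem, $u_i^n\to\tilde u_i$ strongly in $L^2(\Omega)$. This requires $\Omega$ to be a bounded Lipschitz domain, or one works locally on compact subdomains, which is enough for identifying a.e. limits. Passing to a further subsequence, we may also assume $u_i^n\to\tilde u_i$ a.e. in $\Omega$.

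We then check that each limit is sphere-valued. Pointwise a.e. convergence together with $|u_i^n|=1$ gives $|\tilde u_i|=1$ a.e. Together with $\tilde u_i\in H^1(\Omega,\mathbb R^3)$, this yields $\tilde u_i\in H^1(\Omega,S^2)$. We expect this step, which keeps the limits in the sphere-valued class even though that class is not weakly closed as a linear space, to be the only point needing care; the strong $L^2$ convergence is what makes it work.

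Finally we identify the constraint. The $L^2$ bound above shows that $\sum_i u_i^n\to f$ strongly in $L^2(\Omega)$ as $\lambda_n\to+\infty$. On the other hand, $\sum_i u_i^n\to\sum_i\tilde u_i$ strongly in $L^2(\Omega)$. Uniqueness of $L^2$ limits then gives \eqref{LimitConstraint}.
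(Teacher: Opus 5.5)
Your proposal is correct and follows essentially the same route as the paper. The energy bound gives $H^1$-boundedness and $\int_\Omega|\sum_i u_i^n-f|^2\,dx\le C/\lambda_n$; weak $H^1$ compactness and Rellich then give strong $L^2$ convergence, hence $|\tilde u_i|=1$, and uniqueness of $L^2$ limits yields the constraint. As you observe, $N\ge3$ is never used.

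Your caveat about domain regularity is harmless. Compactness on subdomains, a diagonal a.e.\ subsequence and the uniform bound $|u_i^n|=1$ already give strong $L^2(\Omega)$ convergence on the bounded set $\Omega$, by dominated convergence.
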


\begin{proof}
Assume that \eqref{UniformEnergySequence} holds. Then
\begin{equation}\label{EnergyBoundExpanded}
\frac12
\sum_{i=1}^{N}
\int_{\Omega}|\nabla u_{i}^{n}|^{2}\,dx
+
\lambda_{n}
\int_{\Omega}
\left|
\sum_{i=1}^{N}u_{i}^{n}-f
\right|^{2}\,dx
\le C.
\end{equation}
In particular, for every \(i=1,\ldots,N\), the sequence \((u_{i}^{n})\) is bounded in \(H^{1}(\Omega)\). Hence, up to a subsequence,
\begin{equation}\label{WeakLimitUi}\nonumber
u_{i}^{n}\rightharpoonup \tilde{u}_{i}
\quad\text{weakly in }H^{1}(\Omega),
\end{equation}
and 
\begin{equation}\label{StrongLimitUi}
u_{i}^{n}\to \tilde{u}_{i}
\quad\text{strongly in }L^{2}(\Omega).
\end{equation}
Since $|u_{i}^{n}|=1\quad\text{a.e. in }\Omega,$
\eqref{StrongLimitUi} implies
$
|\tilde{u}_{i}|=1
\quad\text{a.e. in }\Omega.
$ 
Thus
\begin{equation}\label{Ui0Sphere}\nonumber
\tilde{u}_{i}\in H^{1}(\Omega,S^{2}).
\end{equation}
Moreover, from \eqref{EnergyBoundExpanded},
\begin{equation}\label{PenaltyBound}\nonumber
\lambda_{n}
\int_{\Omega}
\left|
\sum_{i=1}^{N}u_{i}^{n}-f
\right|^{2}\,dx
\le C.
\end{equation}
Hence
\begin{equation}\label{PenaltyToZero}\nonumber
\int_{\Omega}
\left|
\sum_{i=1}^{N}u_{i}^{n}-f
\right|^{2}\,dx
\le
\frac{C}{\lambda_{n}}
\longrightarrow 0.
\end{equation}
Therefore,
\begin{equation}\label{SumStrongLimit}
\sum_{i=1}^{N}u_{i}^{n}
\to f
\quad\text{strongly in }L^{2}(\Omega).
\end{equation}
By \eqref{StrongLimitUi}, \eqref{SumStrongLimit} and uniqueness of the limit in \(L^{2}(\Omega)\), we obtain \eqref{LimitConstraint}.
\end{proof}
\medskip

The following proposition establishes the second assertion of Theorem \ref{thm_singularity}.

\begin{proposition}\label{thm:gamma-convergence}
Assume that \(N\ge3\). Let \( (\lambda_{n}) \) be a sequence of positive numbers such that
$\lambda_{n}$ tends to $+\infty$. Then the family \((F_{\lambda_{n}})\) \(\Gamma\)-converges, with respect to the weak topology of \(\mathcal H\), to the functional $F_{\infty}$ defined in \eqref{Finfinity}. More precisely:

\begin{enumerate}
\item[(i)] Let
$
(u_{1}^{n},\ldots,u_{N}^{n})\in\mathcal H
$
be such that $(u_{1}^{n},\ldots,u_{N}^{n})
\rightharpoonup
(\tilde{u}_{1},\ldots,\tilde{u}_{N})
\quad\text{in }\mathcal H.$ Then
\begin{equation}\label{LimitInHfGamma}\nonumber
(\tilde{u}_{1},\ldots,\tilde{u}_{N})\in\mathcal H_{f}\quad \mbox{and} \quad F_{\infty}(\tilde{u}_{1},\ldots,\tilde{u}_{N})
\le
\liminf_{n\to\infty}
F_{\lambda_{n}}(u_{1}^{n},\ldots,u_{N}^{n}).
\end{equation}

\item[(ii)] For every $(\tilde{u}_{1},\ldots,\tilde{u}_{N})\in\mathcal H_{f},$
there exists a sequence $(u_{1}^{n},\ldots,u_{N}^{n})\in\mathcal H$
such that
\begin{equation}\label{RecoveryWeakConv}\nonumber
(u_{1}^{n},\ldots,u_{N}^{n})
\rightharpoonup
(\tilde{u}_{1},\ldots,\tilde{u}_{N})
\quad\text{in }\mathcal H,
\end{equation}
and
\begin{equation}\label{GammaLimsup}\nonumber
\limsup_{n\to\infty}
F_{\lambda_{n}}(u_{1}^{n},\ldots,u_{N}^{n})
\le
F_{\infty}(\tilde{u}_{1},\ldots,\tilde{u}_{N}).
\end{equation}
\end{enumerate}

Consequently,
\begin{equation}\label{MinimaConvergenceGamma}
E_{\lambda_{n}}\longrightarrow E_{\infty}.
\end{equation}
\end{proposition}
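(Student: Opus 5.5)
The proof splits into the liminf inequality (i), the recovery sequence (ii), and the convergence of minima.

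For (i), let $(u_1^n,\ldots,u_N^n)\rightharpoonup(\tilde u_1,\ldots,\tilde u_N)$ weakly in $\mathcal H$. We may assume $\liminf_n F_{\lambda_n}(u_1^n,\ldots,u_N^n)<+\infty$; otherwise there is nothing to prove. We then pass to a subsequence realizing the liminf along which $F_{\lambda_n}\le C$. Lemma~\ref{lem:constraint-limit} applied to this subsequence gives $\sum_i\tilde u_i=f$ a.e. The weak limit is unique, so the limit supplied by the lemma coincides with $(\tilde u_i)$, and hence $(\tilde u_1,\ldots,\tilde u_N)\in\mathcal H_f$. Since $\lambda_n B\ge 0$, we have $F_{\lambda_n}\ge A$. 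Weak lower semicontinuity of the Dirichlet integral in $H^1$ then gives
\[
F_\infty(\tilde u_1,\ldots,\tilde u_N)\le\liminf_n A(u^n)\le\liminf_n F_{\lambda_n}(u^n).
\]
One caveat concerns the statement as written. If the liminf is infinite, the claim $\tilde U\in\mathcal H_f$ does not follow from this argument. We adopt the usual convention that $F_\infty=+\infty$ off $\mathcal H_f$, and with that convention the inequality is trivial in this case.

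For (ii), take the constant recovery sequence $u_i^n=\tilde u_i$. Because $\sum_i\tilde u_i=f$ a.e., the penalty term vanishes identically, so $F_{\lambda_n}(\tilde U)=F_\infty(\tilde U)$ for all $n$, which gives the limsup inequality. Weak convergence is trivial here.

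For the convergence of minima, write $U^n$ for a minimizer of $F_{\lambda_n}$. By Proposition~\ref{lem:uniform-bound}, $E_{\lambda_n}(f)\le C$, so $F_{\lambda_n}(U^n)\le C$ and $(U^n)$ is bounded in $\mathcal H$. Two inequalities are needed.
\begin{itemize}
\item Upper bound: for every $\tilde U\in\mathcal H_f$, step (ii) gives $E_{\lambda_n}\le F_{\lambda_n}(\tilde U)=F_\infty(\tilde U)$. Taking the infimum over $\tilde U$ yields $\limsup_n E_{\lambda_n}\le E_\infty$.
\item Lower bound: take a subsequence realizing $\liminf_n E_{\lambda_n}$. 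By boundedness, extract a further weakly convergent subsequence with limit $\tilde U$. Step (i) gives $\tilde U\in\mathcal H_f$ and $E_\infty\le F_\infty(\tilde U)\le\liminf_n E_{\lambda_n}$.
\end{itemize}
Together these give $E_{\lambda_n}\to E_\infty$. The same argument shows that $E_\infty$ is attained, since the limit $\tilde U$ is a minimizer of $F_\infty$.

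The only delicate point is this compactness step. It relies on the uniform bound coming from the exact decomposition available for $N\ge3$; for $N=2$, Proposition~\ref{prop:N2Divergence} shows that this bound fails. Everything else is routine.
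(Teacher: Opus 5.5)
Your proof is correct and follows the paper's argument: the liminf inequality via Lemma~\ref{lem:constraint-limit} plus weak lower semicontinuity, the constant recovery sequence, and convergence of minima by compactness under the uniform bound. Your explicit two-sided argument just unpacks the fundamental theorem of $\Gamma$-convergence that the paper cites, and it also shows that $E_\infty(f)$ is attained, which the paper later uses without comment. Your caveat on (i) is also well taken: when the liminf is $+\infty$, the membership $(\tilde u_1,\ldots,\tilde u_N)\in\mathcal H_f$ can fail (e.g.\ a constant sequence off $\mathcal H_f$), so (i) holds only with the convention $F_\infty=+\infty$ outside $\mathcal H_f$ or under a finite-liminf hypothesis, a point the paper's ``nothing to prove'' glosses over.
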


\begin{proof}
We first prove the lower bound. Let
\[
(u_{1}^{n},\ldots,u_{N}^{n})
\rightharpoonup
(\tilde{u}_{1},\ldots,\tilde{u}_{N})
\quad\text{weakly in }\mathcal H.
\]
If $\liminf_{n\to\infty}
F_{\lambda_{n}}(u_{1}^{n},\ldots,u_{N}^{n})
=
+\infty,$ there is nothing to prove. Otherwise, up to a subsequence, there exists \(C>0\) such that
\begin{equation}\label{BoundedEnergyGamma}\nonumber
F_{\lambda_{n}}(u_{1}^{n},\ldots,u_{N}^{n})\le C
\qquad
\forall n.
\end{equation}
Since \(N\ge3\), Lemma~\ref{lem:constraint-limit} gives
\begin{equation}\label{HfLimitGammaProof}\nonumber
(\tilde{u}_{1},\ldots,\tilde{u}_{N})\in\mathcal H_{f}.
\end{equation}
By weak lower semicontinuity,
\begin{equation}\label{DirichletLscGamma}\nonumber
\frac12
\int_{\Omega}
\sum_{i=1}^{N}|\nabla \tilde{u}_{i}|^{2}\,dx
\le
\liminf_{n\to\infty}
\frac12
\int_{\Omega}
\sum_{i=1}^{N}|\nabla u_{i}^{n}|^{2}\,dx.
\end{equation}
Since the penalization term is nonnegative,
\begin{equation}\label{PenaltyNonNegativeGamma}\nonumber
F_{\lambda_{n}}(u_{1}^{n},\ldots,u_{N}^{n})
\ge
\frac12
\int_{\Omega}
\sum_{i=1}^{N}|\nabla u_{i}^{n}|^{2}\,dx.
\end{equation}
Therefore,
\begin{equation}\label{LowerBoundGammaProof}\nonumber
F_{\infty}(\tilde{u}_{1},\ldots,\tilde{u}_{N})
\le
\liminf_{n\to\infty}
F_{\lambda_{n}}(u_{1}^{n},\ldots,u_{N}^{n}).
\end{equation}

\bigskip

We now prove the upper bound. Let $(\tilde{u}_{1},\ldots,\tilde{u}_{N})\in\mathcal H_{f}.$ Define
\begin{equation}\label{RecoverySequence}\nonumber
u_{i}^{n}:=\tilde{u}_{i},
\qquad
i=1,\ldots,N.
\end{equation}
Then
\begin{equation}\label{RecoveryWeakConvProof}\nonumber
(u_{1}^{n},\ldots,u_{N}^{n})
\rightharpoonup
(\tilde{u}_{1},\ldots,\tilde{u}_{N})
\quad\text{weakly in }\mathcal H.
\end{equation}
Since $(\tilde{u}_{1},\ldots,\tilde{u}_{N})\in\mathcal H_{f}$, we get
\begin{equation}\label{RecoveryEnergyEquality}\nonumber
F_{\lambda_{n}}(u_{1}^{n},\ldots,u_{N}^{n})
=
F_{\infty}(\tilde{u}_{1},\ldots,\tilde{u}_{N}).
\end{equation}
Therefore,
\begin{equation}\label{UpperBoundGammaProof}\nonumber
\limsup_{n\to\infty}
F_{\lambda_{n}}(u_{1}^{n},\ldots,u_{N}^{n})
=
F_{\infty}(\tilde{u}_{1},\ldots,\tilde{u}_{N}).
\end{equation}

\noindent Having established the $\Gamma$-convergence of $(F_{\lambda_n})$ to $F_\infty$, it remains to prove the convergence of the corresponding minimum values.

\noindent In fact, by Lemma~\ref{lem:constraint-limit}, every sequence of minimizers with uniformly bounded energy is relatively compact in the weak topology of \(\mathcal H\). Hence, the family \((F_{\lambda_{n}})\) is equicoercive in this topology. The fundamental theorem of \(\Gamma\)-convergence yields \eqref{MinimaConvergenceGamma}. This completes the proof.
\end{proof}

\bigskip

The third assertion of Theorem~\ref{thm_singularity} is established by the following proposition, which provides an explicit decomposition of the limiting energy.

\begin{proposition}\label{prop:Einfty-identification}
Assume that \(N\ge3\). Then the limiting energy \(E_{\infty}(f) \) satisfies
\begin{equation}\label{EinftyRepresentation}
E_{\infty}(f)
=
\frac{1}{2N}
\int_{\Omega}|\nabla f|^{2}\,dx
+
\min_{(w_{1},\ldots,w_{N})\in W_{f}}
\frac12
\sum_{i=1}^{N}
\int_{\Omega}|\nabla w_{i}|^{2}\,dx,
\end{equation}
where $W_{f}$ is defined by \eqref{Wf}.

\end{proposition}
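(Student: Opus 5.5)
The proof rests on an orthogonal decomposition of each admissible configuration into an average part and fluctuations. For $U=(u_1,\ldots,u_N)\in\mathcal H_f$, set $w_i:=u_i-\frac1N f$ for $i=1,\ldots,N$. Then $w_i\in H^1(\Omega,\mathbb R^3)$, and the constraint $\sum_i u_i=f$ gives $\sum_i w_i=0$. Also $|\frac1N f+w_i|=|u_i|=1$ a.e., so $(w_1,\ldots,w_N)\in W_f$. Conversely, each $(w_i)\in W_f$ defines $u_i:=\frac1N f+w_i\in H^1(\Omega,S^2)$ with $\sum_i u_i=f$. The map $U\mapsto W$ is therefore a bijection between $\mathcal H_f$ and $W_f$.

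Next we expand the Dirichlet energy pointwise:
\[
\sum_{i=1}^N|\nabla u_i|^2=\frac1{N}|\nabla f|^2+\frac2N\sum_{i=1}^N\nabla f:\nabla w_i+\sum_{i=1}^N|\nabla w_i|^2 .
\]
Since $\sum_i w_i=0$ a.e. and $w_i\in H^1$, we have $\sum_i\nabla w_i=\nabla\bigl(\sum_i w_i\bigr)=0$ a.e. Hence the cross term vanishes pointwise, and for every $U\in\mathcal H_f$,
\[
F_\infty(U)=\frac1{2N}\int_\Omega|\nabla f|^2\,dx+\frac12\sum_{i=1}^N\int_\Omega|\nabla w_i|^2\,dx .
\]
Taking the infimum over the bijection gives the formula with $\inf_{W_f}$ in place of $\min_{W_f}$. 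Here $W_f\neq\emptyset$ by Proposition~\ref{lem:uniform-bound}, since $\mathcal H_f\neq\emptyset$.

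It remains to show the infimum is attained; this is the only step that needs real work. I would argue by the direct method. Take a minimizing sequence $(w^n)\subset W_f$; it is bounded in $H^1$ because $|w_i^n|\le 1+\frac1N$ and the gradient energy is bounded. Extract a subsequence with $w_i^n\rightharpoonup w_i$ weakly in $H^1$, strongly in $L^2$ and a.e. The conditions $\sum_i w_i=0$ and $|\frac1N f+w_i|=1$ pass to the a.e. limit, so $(w_i)\in W_f$. Weak lower semicontinuity of the Dirichlet integral then shows the limit is a minimizer.

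Equivalently, one may note that $E_\infty(f)$ is attained in $\mathcal H_f$, as in the proof of Lemma~\ref{lem:constraint-limit} (closedness of $\mathcal H_f$ under weak $H^1$ convergence), and transport that minimizer through the bijection.
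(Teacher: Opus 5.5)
Your proposal is correct and follows essentially the same route as the paper: the substitution $w_i=u_i-\frac1N f$, the vanishing of the cross term because $\sum_i\nabla w_i=0$, and the direct method on a minimizing sequence in $W_f$ using the bound $|w_i^n|\le 1+\frac1N$. Your version applies the identity to an arbitrary $U\in\mathcal H_f$ through the bijection with $W_f$, which is slightly cleaner than the paper's, since the paper starts from a minimizer of $E_\infty(f)$ whose existence it has not yet justified at that point.
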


\begin{proof}
Let $(u_1,\ldots,u_N)\in\mathcal H_f$ be a minimizer of $E_\infty(f)$, and define
\[
w_i=u_i-\frac1Nf,\qquad i=1,\ldots,N.
\]
Then $(w_1,\ldots,w_N)\in W_f$ and
\[
\nabla u_i=\frac1N\nabla f+\nabla w_i.
\]
Since $\sum_{i=1}^N w_i=0$, we have $\sum_{i=1}^N\nabla w_i=0$, and therefore
\[
\sum_{i=1}^N|\nabla u_i|^2
=\frac1N|\nabla f|^2+\sum_{i=1}^N|\nabla w_i|^2.
\]
Integrating over $\Omega$ gives
\begin{equation}\label{ineq}
\frac12\sum_{i=1}^N\int_\Omega |\nabla u_i|^2\,dx
=
\frac1{2N}\int_\Omega |\nabla f|^2\,dx
+\frac12\sum_{i=1}^N\int_\Omega |\nabla w_i|^2\,dx.
\end{equation}
Using the fact that $(u_1,\ldots,u_N)$ is a minimizer of $E_\infty(f)$, we obtain
\[
E_\infty(f)\ge
\frac1{2N}\int_\Omega |\nabla f|^2\,dx
+\inf_{W_f}
\frac12\sum_{i=1}^N\int_\Omega |\nabla w_i|^2\,dx.
\]

\noindent Conversely, let $(\widetilde w_1,\ldots,\widetilde w_N)\in W_f$ and define
\[
\widetilde u_i=\frac1Nf+\widetilde w_i,
\qquad i=1,\ldots,N.
\]
By the definition of $W_f$, we have
$(\widetilde u_1,\ldots,\widetilde u_N)\in\mathcal H_f$.
Hence,
\[
E_\infty(f)
\le
\frac12
\sum_{i=1}^N
\int_\Omega |\nabla\widetilde u_i|^2\,dx.
\]
By the same argument used to derive \eqref{ineq}, we obtain
\[
\frac12
\sum_{i=1}^N
\int_\Omega |\nabla\widetilde u_i|^2\,dx
=
\frac1{2N}
\int_\Omega |\nabla f|^2\,dx
+
\frac12
\sum_{i=1}^N
\int_\Omega |\nabla\widetilde w_i|^2\,dx.
\]

\noindent Taking the infimum over $W_f$ gives the reverse inequality, and therefore

\[
E_\infty(f)
=
\frac1{2N}\int_\Omega |\nabla f|^2\,dx
+
\inf_{(w_1,\ldots,w_N)\in W_f}
\frac12
\sum_{i=1}^N
\int_\Omega |\nabla w_i|^2\,dx.
\]
\medskip

To complete the proof, it remains to show that the infimum over $W_f$ is attained. To this end,
since $W_f \neq \varnothing$, let $(w_1^n, \ldots, w_N^n)_{n \geq 1} \subset W_f$
be a minimizing sequence for
\[
    m_f
    \;:=\;
    \inf_{(w_1,\ldots,w_N)\in W_f}
    \frac{1}{2}\sum_{i=1}^{N}
    \int_{\Omega}|\nabla w_i|^2\,dx.
\]
In particular, $\bigl(\sum_{i=1}^{N}\int_{\Omega}|\nabla w_i^n|^2\,dx\bigr)_{n}$
is bounded in $\mathbb{R}$.
Moreover, since $\bigl|\tfrac{1}{N}f + w_i^n\bigr| = 1$ and $|f| = 1$
almost everywhere in $\Omega$, we obtain
\[
    |w_i^n|
    \;\leq\;
    \Bigl|\tfrac{1}{N}f + w_i^n\Bigr| + \tfrac{1}{N}|f|
    \;=\;
    1 + \tfrac{1}{N}
    \qquad\text{a.e.\ in }\Omega.
\]
Therefore, $(w_1^n,\ldots,w_N^n)_{n}$ is bounded in
$\bigl(H^1(\Omega,\mathbb{R}^3)\bigr)^N$.
Hence, there exist $w_1,\ldots,w_N\in H^1(\Omega,\mathbb{R}^3)$ such that,
up to a subsequence and for every $i=1,\ldots,N$,
\[
    w_i^n \rightharpoonup w_i
    \quad\text{weakly in }H^1(\Omega,\mathbb{R}^3),
    \qquad
    w_i^n \to w_i
    \quad\text{strongly in }L^2(\Omega,\mathbb{R}^3).
\]
The strong $L^2$-convergence and the constraint
$\sum_{i=1}^{N}w_i^n = 0$ a.e.\ in $\Omega$ yield,
\[
    \sum_{i=1}^{N}w_i = 0
    \qquad\text{a.e.\ in }\Omega.
\]
Furthermore, $w_i^n(x)\to w_i(x)$ for a.e.\ $x\in\Omega$,
so passing to the pointwise limit in the constraint
$\bigl|\tfrac{1}{N}f(x) + w_i^n(x)\bigr| = 1$ gives
\[
    \Bigl|\tfrac{1}{N}f(x) + w_i(x)\Bigr| = 1
    \qquad\text{for a.e.\ }x\in\Omega.
\]
Thus $(w_1,\ldots,w_N)\in W_f$.
By the weak lower semicontinuity of the Dirichlet energy,
\[
    \frac{1}{2}\sum_{i=1}^{N}
    \int_{\Omega}|\nabla w_i|^2\,dx
    \;\leq\;
    \liminf_{n\to\infty}
    \frac{1}{2}\sum_{i=1}^{N}
    \int_{\Omega}|\nabla w_i^n|^2\,dx
    \;=\; m_f,
\]
while the reverse inequality holds by the definition of $m_f$.
Hence, the infimum $m_f$ is attained at $(w_1,\ldots,w_N)$,
which completes the proof of~\eqref{EinftyRepresentation}.\\
\end{proof}

Now, we will prove the gap phenomenon and the occurrence of
singular minimizers \eqref{GapLargeLambda}.

\begin{proposition}\label{prop_singularity}
Assume that \(N\ge3\). Assume that $f$ cannot be decomposed as $f=\sum_{i=1}^N u_i$ where each map $u_i\in H^1(\Omega,S^2)$ is a strong limit of smooth maps in $H^1(\Omega,S^2)$. Then there exists $\lambda_0>0$, depending on $f$ and $N$, such that for all $\lambda \ge \lambda_0$, any minimizer $(u_1^\lambda,\dots,u_N^\lambda)$ of $F_\lambda$ is not regular in $\Omega$. In particular, at least one
component \(u_i^\lambda\) admits a singularity in \(\Omega\). Moreover, 
\begin{equation}\label{GapLargeLambda_hcn}
E_{\lambda}(f)<E_{\lambda,\mathrm{reg}}(f).
\end{equation}
\end{proposition}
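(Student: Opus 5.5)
I would argue by contradiction, following the strategy of \cite{HZ} for the single-field model. The key object is the regular minimization problem $E_{\lambda,\mathrm{reg}}(f)$. First I would show that $E_{\lambda,\mathrm{reg}}(f)\to+\infty$ as $\lambda\to+\infty$. Suppose instead there were $\lambda_n\to+\infty$ and smooth configurations $U^n=(u_1^n,\ldots,u_N^n)\in C^1(\overline\Omega,S^2)^N$ with $F_{\lambda_n}(U^n)\le C$. Lemma~\ref{lem:constraint-limit} gives, up to a subsequence, weak $H^1$ limits $\tilde u_i\in H^1(\Omega,S^2)$ with $\sum_i\tilde u_i=f$. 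Each $\tilde u_i$ is then a weak $H^1$ limit of smooth $S^2$-valued maps. By the Bethuel--Brezis--Coron / Bethuel theory \cite{BBC,BCL}, in dimension three the weak sequential closure of smooth maps coincides with the strong closure of $C^\infty(\overline\Omega,S^2)$ in $H^1(\Omega,S^2)$: both are characterized by vanishing of the $D$-field, i.e. $\operatorname{div}D(u)=0$, where $D(u)=(u\cdot u_{x_2}\times u_{x_3},\,u\cdot u_{x_3}\times u_{x_1},\,u\cdot u_{x_1}\times u_{x_2})$, and this condition is weakly closed under bounded energy. Hence each $\tilde u_i$ is a strong limit of smooth maps, which contradicts the hypothesis on $f$. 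I expect this identification (weak limit of smooth maps $\Rightarrow$ strongly approximable) to be the main obstacle. I would invoke Bethuel's theorem \cite{BCL} that, for $\Omega\subset\mathbb R^3$, $u\in H^1(\Omega,S^2)$ is a strong limit of smooth maps iff $\operatorname{div}D(u)=0$. I would then check that $\operatorname{div}D(u^n)=0$ passes to weak limits, by testing against smooth $\varphi$ and using the null-Lagrangian (Jacobian) structure together with the boundedness of the maps.

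Second, Proposition~\ref{lem:uniform-bound} gives $E_\lambda(f)\le \frac12\int_\Omega|\nabla f|^2\,dx=:C_f$ for all $\lambda>0$. Since $E_{\lambda,\mathrm{reg}}(f)\to+\infty$, there is $\lambda_0$ such that $E_{\lambda,\mathrm{reg}}(f)>C_f\ge E_\lambda(f)$ for $\lambda\ge\lambda_0$, which is exactly \eqref{GapLargeLambda_hcn}. The monotonicity of $\lambda\mapsto E_{\lambda,\mathrm{reg}}(f)$ follows from the argument of Lemma~\ref{properties}, using pointwise monotonicity in $\lambda$ of $F_\lambda(U)$. This guarantees the inequality persists for all $\lambda\ge\lambda_0$ once it holds at $\lambda_0$, rather than only along a sequence.

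Finally, let $U^\lambda$ be any minimizer with $\lambda\ge\lambda_0$. If all components were regular, i.e. belonged to $C^1(\overline\Omega,S^2)$, then $U^\lambda$ would be admissible for $E_{\lambda,\mathrm{reg}}(f)$, giving $E_{\lambda,\mathrm{reg}}(f)\le F_\lambda(U^\lambda)=E_\lambda(f)$. This contradicts the gap, so some $u_i^\lambda$ must be singular.

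One technical point deserves care: ``regular in $\Omega$'' should mean smooth up to the boundary, or else one needs a localization step. If only interior regularity is assumed, I would instead use that a map which is continuous in $\Omega$ is still a strong $H^1$ limit of smooth maps (via the $D$-field criterion, since $\operatorname{div}D=0$ is local). I would then run the same contradiction against the hypothesis on $f$ in the large-$\lambda$ limit of the first step, applied to the minimizers themselves.
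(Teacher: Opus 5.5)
\textbf{There is a genuine gap in your first step.} It rests on a false claim. In dimension three, the weak sequential closure of smooth $S^2$-valued maps in $H^1$ is \emph{not} the strong closure.

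By Bethuel--Brezis--Coron \cite{BCL}, for a smooth bounded $\Omega$ every $u\in H^1(\Omega,S^2)$ is a weak $H^1$-limit of smooth maps with bounded Dirichlet energy. The extra cost is proportional to the length of a minimal connection of the singularities. The model case is $x/|x|$ on a ball: you unwind its degree inside a thin tube joining $0$ to the boundary, at an extra cost of order $8\pi$ times the tube length. Yet $\operatorname{div}D(x/|x|)=4\pi\delta_0\neq 0$.

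Your weak-closedness check fails at the point you treat as routine. The Jacobian $\partial_{x_2}u^n\times\partial_{x_3}u^n$ does converge in distributions. But it is only bounded in $L^1$ and can concentrate along curves, while $u^n$ converges only in $L^p$ for $p<\infty$. So $u^n\cdot(\partial_{x_2}u^n\times\partial_{x_3}u^n)$ need not converge to $D(u)$. This mismatch between weak and strong closure is the very mechanism behind the gap phenomena of \cite{BBC,BCL}, so it cannot be bypassed.

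In fact the conclusion of your first step is false. Take $(f_k,p_1,\dots,p_{N-1})$ with $f_k\in C^1(\overline\Omega,S^2)$ and $f_k\rightharpoonup f$, and let $k\to\infty$ at fixed $\lambda$. This gives $E_{\lambda,\mathrm{reg}}(f)\le \frac12\sup_k\int_\Omega|\nabla f_k|^2\,dx$ for every $\lambda$. So $E_{\lambda,\mathrm{reg}}(f)$ stays bounded, and your comparison with the constant $C_f=\frac12\int_\Omega|\nabla f|^2\,dx$ has no basis.

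\textbf{What is missing is the paper's strict-energy-drop argument, which needs no blow-up.} Take $\lambda_n\to+\infty$ and $U^n\in C^1(\overline\Omega,S^2)^N$ with $F_{\lambda_n}(U^n)\le E_{\lambda_n,\mathrm{reg}}(f)+\varepsilon_n$. If $E_{\lambda_n,\mathrm{reg}}(f)$ is unbounded, the gap follows from Proposition~\ref{lem:uniform-bound}. Otherwise Lemma~\ref{lem:constraint-limit} gives a weak limit $\tilde U\in\mathcal H_f$.

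You do not need $\tilde u_i$ to be strongly approximable; the argument uses the opposite fact. The hypothesis on $f$ says that along no subsequence can all components converge strongly, since otherwise every $\tilde u_i$ would be a strong limit of smooth maps. Hence
\[
F_\infty(\tilde U)<\liminf_{n}\frac12\sum_{i=1}^N\int_\Omega|\nabla u_i^n|^2\,dx\le\liminf_n E_{\lambda_n,\mathrm{reg}}(f).
\]
Because the penalty vanishes on $\mathcal H_f$, $E_{\lambda_n}(f)\le F_{\lambda_n}(\tilde U)=F_\infty(\tilde U)$. So the gap holds with a fixed positive margin for large $n$. Running this argument by contradiction along an arbitrary sequence $\lambda_n\to+\infty$ yields $\lambda_0$.

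Your final step, that a regular minimizer would be admissible for $E_{\lambda,\mathrm{reg}}(f)$, matches the paper and goes through once the gap is obtained this way.
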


\begin{proof}
By Proposition~\ref{lem:uniform-bound},
\begin{equation}\label{energybound}
E_{\lambda}(f)<+\infty,
\qquad
\forall \lambda>0.
\end{equation}

\noindent Let \((\lambda_{n})\) and \((\varepsilon_{n})\) be two sequences of positive numbers such that $ \lambda_{n}\to+\infty$ and $\varepsilon_{n}\to 0$ as $n$ tends to  $+\infty$. For every \(n\in\mathbb N\), there exists $
(u_{1}^{n},\ldots,u_{N}^{n})
\in C^{1}(\overline{\Omega},S^{2})^{N}
$
such that
\begin{equation}\label{RegularAlmostMinimizer}
F_{\lambda_{n}}(u_{1}^{n},\ldots,u_{N}^{n})
\le
E_{\lambda_{n},\mathrm{reg}}(f)+\varepsilon_{n}.
\end{equation}
We may assume that \(E_{\lambda_{n},\mathrm{reg}}(f)\) is bounded. Otherwise, by \eqref{energybound}, the conclusion is satisfied. Thus,
$F_{\lambda_{n}}(u_{1}^{n},\ldots,u_{N}^{n})\le C.$

\noindent Applying Lemma~\ref{lem:constraint-limit}, we deduce that, up to a subsequence still denoted by $(u_{1}^{n},\ldots,u_{N}^{n}),$ there exist maps $ \tilde{u}_{1},\ldots,\tilde{u}_{N}\in H^{1}(\Omega,S^{2})$ such that for every \(i=1,\ldots,N\),
\begin{equation}\label{WeakConvergenceProof}
u_{i}^{n}\rightharpoonup \tilde{u}_{i}
\quad\text{weakly in }H^{1}(\Omega,S^{2}),
\end{equation}
and
$$\sum_{i=1}^{N}\tilde{u}_{i}=f
\quad\text{a.e. in }\Omega.$$
Since \(f\) cannot be written as a sum of \(N\) maps that are strong \(H^1(\Omega,S^2)\)-limits of smooth sphere-valued maps, the above convergences \eqref{WeakConvergenceProof} cannot all be strong in \(H^1(\Omega,S^2)\).
Hence, there exists \(k\in\{1,\ldots,N\}\) such that
$$u_{k}^{n}\nrightarrow \tilde{u}_{k}
\quad\text{strongly in }H^{1}(\Omega,S^{2}).$$
Consequently,
$$\int_{\Omega}|\nabla \tilde{u}_{k}|^{2}\,dx
<
\liminf_{n\to+\infty}
\int_{\Omega}|\nabla u_{k}^{n}|^{2}\,dx.$$
By weak lower semicontinuity, for every \(i\neq k\),
$$\int_{\Omega}|\nabla \tilde{u}_{i}|^{2}\,dx
\le
\liminf_{n\to+\infty}
\int_{\Omega}|\nabla u_{i}^{n}|^{2}\,dx.$$
Therefore,
$$\frac12
\sum_{i=1}^{N}
\int_{\Omega}|\nabla \tilde{u}_{i}|^{2}\,dx
<
\alpha
:=
\liminf_{n\to+\infty}
\frac12
\sum_{i=1}^{N}
\int_{\Omega}|\nabla u_{i}^{n}|^{2}\,dx.$$

\noindent Then, for every \(\varepsilon>0\), there exists \(N_{0}\in\mathbb N\) such that, for every \(n\ge N_{0}\),
$$E_{\lambda_{n}}(f)
\le
F_{\lambda_{n}}(\tilde{u}_{1},\ldots,\tilde{u}_{N})
=
\frac12
\sum_{i=1}^{N}
\int_{\Omega}|\nabla \tilde{u}_{i}|^{2}\,dx
<
\alpha-\varepsilon,$$
and
$$\alpha-\varepsilon
\le
\frac12
\sum_{i=1}^{N}
\int_{\Omega}|\nabla u_{i}^{n}|^{2}\,dx.$$
Hence
$$E_{\lambda_{n}}(f)
<
\frac12
\sum_{i=1}^{N}
\int_{\Omega}|\nabla u_{i}^{n}|^{2}\,dx
\le
F_{\lambda_{n}}(u_{1}^{n},\ldots,u_{N}^{n}).$$
Therefore, by \eqref{RegularAlmostMinimizer},
\begin{equation}\label{GapApproxReg}
E_{\lambda_{n}}(f)
<
F_{\lambda_{n}}(u_{1}^{n},\ldots,u_{N}^{n})
\le
E_{\lambda_{n},\mathrm{reg}}(f)+\varepsilon_{n}.
\end{equation}
Thus, \eqref{GapApproxReg} implies that, for \(n\) sufficiently large,
\[
E_{\lambda_n}(f)
<
E_{\lambda_n,\mathrm{reg}}(f).
\]

\noindent Therefore, there exists \(\lambda_0>0\) depending on $f$ and $N$  such that
\[
E_{\lambda}(f)
<
E_{\lambda,\mathrm{reg}}(f)
\qquad
\forall \lambda\ge \lambda_0.
\]

\noindent Consequently, no minimizer of \(F_\lambda\) can be regular for \(\lambda\ge\lambda_0\). Hence, every minimizer has at least one
singular point in \(\Omega\).
\end{proof}
\noindent Combining the above propositions, the proof of Theorem \ref{thm_singularity} is now complete.

\vspace{0.5cm}

\noindent\textbf{Authors contributions.}
The authors are listed alphabetically by their surnames, and all authors have made equal contributions.

\medskip

\noindent\textbf{Conflict of interest.}
The authors declare that they have no conflict of interest.

\medskip

\noindent\textbf{Data availability.}
 Data were not used in this study.


\begin{thebibliography}{99}

\bibitem{B87}
H. Brezis,
\emph{Liquid crystals and energy estimates for $S^{2}$-valued maps},
in \emph{Theory and Applications of Liquid Crystals} (J. Ericksen and D. Kinderlehrer, eds.),
IMA Vol. Math. Appl. \textbf{5}, Springer, New York, 1987, pp. 31--52.

\bibitem{GP}
E. G. Virga,
\emph{Variational Theories for Liquid Crystals},
Chapman \& Hall, London, 1994.

\bibitem{MAJ}
P. G. de Gennes, J. Prost,
\emph{The Physics of Liquid Crystals},
2nd ed., Oxford University Press, 1993.

\bibitem{A}
F. Alouges,
\emph{A new algorithm for computing liquid crystal stable configurations: the harmonic mapping case},
SIAM J. Numer. Anal. \textbf{34} (1997), 1708--1726.

\bibitem{BG}
A. DeSimone, R. V. Kohn, S. M\"uller, F. Otto,
\emph{A reduced theory for thin-film micromagnetics},
Comm. Pure Appl. Math. \textbf{55} (2002), 1408--1460.




\bibitem{LC1}
J. M. Ball,
\emph{Liquid crystals and their defects},
in \emph{Mathematical Thermodynamics of Complex Fluids}
(CIME Foundation Subseries),
Springer, Cham, 2017, pp. 1--46.

\bibitem{LC2}
A. Majumdar, A. Zarnescu,
\emph{Landau--De Gennes theory of nematic liquid crystals:
the Oseen--Frank limit and beyond},
Arch. Ration. Mech. Anal. \textbf{196} (2010), no. 1, 227--280.


\bibitem{LC3}
W. Wang, L. Zhang, P. Zhang,
\emph{Modelling and computation of liquid crystals},
Acta Numer. \textbf{30} (2021), 765--851.


\bibitem{MG1}
G. Di Fratta, C. B. Muratov, F. N. Rybakov, V. V. Slastikov,
\emph{Variational principles of micromagnetics revisited},
SIAM J. Math. Anal. \textbf{52} (2020), no. 4, 3580--3601.

\bibitem{MG2}
E. Davoli, G. Di Fratta,
\emph{Homogenization of chiral magnetic materials:
a mathematical evidence of Dzyaloshinskii's predictions on helical structures},
J. Nonlinear Sci. \textbf{30} (2020), 1229--1262.

\bibitem{MG3}
E. Davoli, G. Di Fratta, D. Praetorius, M. Ruggeri,
\emph{Micromagnetics of thin films in the presence of
Dzyaloshinskii--Moriya interaction},
Math. Models Methods Appl. Sci. \textbf{32} (2022), no. 5, 911--939.



\bibitem{IM1}
K. Bredies, M. Holler, M. Storath, A. Weinmann,
\emph{Total generalized variation for manifold-valued data},
SIAM J. Imaging Sci. \textbf{11} (2018), no. 3, 1785--1848.


\bibitem{MG4}
G. Di Fratta, V. Slastikov, A. Zarnescu,
\emph{On a sharp Poincar\'e-type inequality on the 2-sphere
and its application in micromagnetics},
SIAM J. Math. Anal. \textbf{51} (2019), no. 6, 4684--4713.


\bibitem{SU1}
R. Schoen,  K. Uhlenbeck,
\emph{A regularity theory for harmonic maps},
J. Differential Geom. \textbf{17} (1982), 307--335.

\bibitem{SU2}
R. Schoen, K. Uhlenbeck,
\emph{Boundary regularity and the Dirichlet problem for harmonic maps},
J. Differential Geom. \textbf{18} (1983), 253--268.

\bibitem{HKL}
R. Hardt, D. Kinderlehrer, F.-H. Lin,
\emph{Existence and partial regularity of static liquid crystal configurations},
Comm. Math. Phys. \textbf{105} (1986), 547--570.

\bibitem{BBC}
F. Bethuel, H. Brezis, J.-M. Coron,
\emph{Minimisation de $\int|\nabla(u-x/|x|)|^{2}$ et divers phénomènes de gap},
C. R. Acad. Sci. Paris Sér. I Math. \textbf{310} (1990), 859--864.

\bibitem{BCL}
F. Bethuel, H. Brezis,  J.-M. Coron,
\emph{Relaxed energies for harmonic maps},
in \emph{Variational Problems} (H. Berestycki, J.-M. Coron, I. Ekeland, eds.),
Birkhäuser, Basel, 1990, pp. 37--52.

\bibitem{HL}
R. Hardt, F.-H. Lin,
\emph{A remark on $H^{1}$ mappings},
Manuscripta Math. \textbf{56} (1986), 1--10.

\bibitem{LM}
F. Lin, C. Wang,
\emph{The Analysis of Harmonic Maps and Their Heat Flows},
World Scientific, Singapore, 2008.

\bibitem{Z}
F. Zhou,
\emph{Recent developments in the regularity theory for sphere-valued harmonic maps},
in \emph{Geometric Analysis and PDEs} (L. Ambrosio et al., eds.),
Lecture Notes in Math. \textbf{2267}, Springer, 2020, pp. 145--169.



\bibitem{HZ}
R. Hadiji, F. Zhou,
\emph{Regularity of $\int_{\Omega}|\nabla u|^{2}+\lambda\int_{\Omega}|u-f|^{2}$ and some gap phenomenon},
Potential Anal. \textbf{1} (1992), 385--400.

\bibitem{HaZh}
P. Courilleau, S. Dumont, R. Hadiji,
\emph{Regularity of minimizing maps with values in $S^2$ and some numerical simulations},
Adv. Math. Sci. Appl. \textbf{10} (2020), no. 2, 711--733.


\bibitem{GH09}
A. Gaudiello, R. Hadiji,
\emph{Asymptotic analysis, in a thin multidomain, of minimizing maps with values in $S^{2}$},
Ann. Inst. H. Poincaré Anal. Non Linéaire \textbf{26} (2009), no.~1, 59--80.


























\end{thebibliography}
\end{document}